\documentclass[12pt,a4paper]{amsart}
\usepackage{amssymb,amsmath,amsthm}
\usepackage{graphicx}
\usepackage{xcolor}

\usepackage{epsfig}

\usepackage[english]{babel}
\usepackage[utf8]{inputenc}
\usepackage{amsmath}
\usepackage{graphicx}
\usepackage{amsfonts}
\usepackage{enumitem}
\usepackage{amssymb}
\usepackage[colorinlistoftodos]{todonotes}
\usepackage{geometry}
\usepackage{amsthm}
\usepackage{enumitem} 
\usepackage{hyperref}

\newcommand{\remove}[1]{}
\usepackage{tikz}

\long\def\onefigure#1#2{
\begin{figure*}[tbp]
\begin{center}
#1
\end{center}
\caption{#2}
\end{figure*}
} 

\newcommand{\lipefig}[2]  
{\onefigure{\mbox{\psfig{file=#1.eps}}}{\label{f:#1} #2} }

\newcommand{\EE}{\mathbb{E}}

\newcommand{\RR}{\mathbb{R}}

\newcommand{\NN}{\mathbb{N}}
\newcommand{\ZZ}{\mathbb{Z}}

\newtheorem{thm}{Theorem}[section]
\newtheorem{lem}[thm]{Lemma}

\newtheorem{claim}[thm]{Claim}
\newtheorem{fact}[thm]{Fact}

\theoremstyle{definition}

\DeclareMathOperator{\conv}{conv}

\DeclareMathOperator{\vol}{vol}
\DeclareMathOperator{\Area}{Area}

\numberwithin{equation}{section}

\begin{document}

\title{The integer hull of the set $\{(x,y)\in \RR^2: xy\ge N\}$}
\author{Antal Balog and Imre B\'ar\'any}
\keywords{Convex bodies, the integer convex hull, lattices, hyperbolas}
\subjclass[2020]{Primary 11H06, 52C05, secondary 52A22}

\maketitle

\begin{abstract}: The integer convex hull $I(H_N)$ of the set $H_N=\{(x,y)\in \RR^2: xy\ge N\}$ is the convex hull of the lattice points in $H_N$. The vertices of $I(H_N)$ lie in the square $[1,N]^2$. Improving on a recent result of  Alc\'antara et al. ~\cite{Santos} we show that the number of vertices of $I(H_N)$ is of order $N^{1/3}\log N$. We also show that the area of the part of $H_N \setminus I(H_N)$ that lies in the square $[1,N^{2/3}]^2$ is also of order $N^{1/3}\log N$.
\end{abstract}

 \bigskip
\section{Introduction}
Define $H_N=\{(x,y)\in \RR^2: xy\ge N, x,y>0\}$ which is an unbounded convex set and $N$ is a (large) positive integer. Its {\sl integer convex hull} or {\sl integer hull} for short is the convex hull of the integer (or lattice) points in $H_N$:
\[
I(H_N)=\conv(\ZZ^2 \cap H_N).
\]
This is bounded by the two halflines $\{(x,1): x\ge 1\}$ and $\{(1,y): y\ge 1\}$ and a convex lattice chain connecting $(1,N)$ with $(N,1)$. Our target is to determine the order of magnitude of the number of vertices of $I(H_N)$. The traditional notation for the number of vertices of a polytope $P\subset \RR^d$ is $f_0(P)$, so we are interested in $f_0(I(H_N))$.

\medskip
Motivation for this question comes from integer programming and, more generally, from the theory of geometry of numbers. The determination or estimation of $f_0(I(H_N))$ is considered in a recent paper by Alc\'antara et al. \cite{Santos}. They establish among other results that
\begin{equation}\label{eq:Sant}
N^{1/3} \ll f_0(I(H_N)) \ll N^{1/3} \log N,
\end{equation}
and ask what the right order of magnitude of $f_0(I(H_N))$ is. Here and throughout the paper the notation $f(N)\ll g(N)$ means that there is a constant $C>0$ such that $0<f(N)<Cg(N)$ for every $N \in \NN$. The notation $f(N)\gg g(N)$ is analogous.  We will also use the $O(.)$ notation. Our main result establishes the order of magnitude of $f_0(I(H_N))$. 

\begin{thm}\label{th:main}
\[
N^{1/3} \log N \ll f_0(I(H_N)) \ll N^{1/3} \log N.
\]
\end{thm}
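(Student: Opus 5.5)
The upper bound is already in \eqref{eq:Sant}, proved in \cite{Santos}, so only the lower bound $f_0(I(H_N))\gg N^{1/3}\log N$ needs work. By the symmetry $(x,y)\mapsto(y,x)$ it suffices to work in the region $x\le \sqrt N$. I would split $[1,\sqrt N]$ dyadically into $X<x\le 2X$ with $N^{1/3}\le X\le N^{1/2}/C$; this gives about $\tfrac16\log_2 N$ ranges. The goal is to show that each range contributes $\gg N^{1/3}$ vertices. Summing over the ranges then gives the claim.

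\medskip
In the range $x\sim X$ the boundary curve $y=N/x$ has slope $-N/x^2\asymp -N/X^2$ and curvature radius $R\asymp (N/X^2)^{3/2}X^{3}/N\cdot$(const), more precisely $\rho\asymp N^2/X^3\cdot (X^2/N)^{-1/2}$. The cleaner route is to apply the unimodular map $(x,y)\mapsto (x, y+kx)$ with integer $k\approx N/X^2$. This makes the local tangent nearly horizontal and lets us compare with the standard count for a convex arc of length $L$ and curvature radius $\rho$: the number of vertices of the integer hull near it should be $\asymp L\rho^{-1/3}$. The transformed arc over $x\in[X,2X]$ has length $\asymp X$, its second derivative is $2N/x^3\asymp N/X^3$, and hence $\rho\asymp X^3/N$. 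This gives $X\cdot (N/X^3)^{1/3}=N^{1/3}$, independent of $X$, which is exactly why every dyadic range contributes equally and $\log N$ appears. The condition $X\ge N^{1/3}$ ensures $\rho\ge 1$, so the arc is not too curved relative to the lattice.

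\medskip
To make the lower bound rigorous in a range, I would cut $[X,2X]$ into $\asymp N^{1/3}$ subintervals of length $\ell\asymp \rho^{2/3}=X^2/N^{2/3}$; the cap cut off by a chord of length $\ell$ has height $\asymp \ell^2/\rho\asymp \rho^{1/3}$. In each subinterval I would exhibit a lattice point very close to the hyperbola, namely an integer $x$ with $\{N/x\}$ small, i.e.\ $N/x$ just above an integer. Then I would show that points of $H_N\cap\ZZ^2$ in adjacent subintervals cannot make it redundant, so that each such block forces at least one vertex. A simpler alternative is to show that the edges of $I(H_N)$ over $x\sim X$ have horizontal length $\ll \ell$, since then the number of vertices is $\gg X/\ell= N^{1/3}$. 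To get this, I would argue that an edge is a primitive lattice segment $[p,p+(a,-b)]$ with all of $H_N$ on one side. The region between the segment and the hyperbola then contains no lattice point, and this region has area $\asymp a^3/\rho$. A lattice-free convex region adjacent to a lattice segment with primitive direction must have width $\le 1/|(a,b)|$ times something in the normal direction, and combining these bounds gives $a\ll \rho^{2/3}$ up to constants.

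\medskip
The main obstacle is this last step: converting "no lattice points between the edge and the curve" into an upper bound on edge length uniformly in $X$. Lattice-free regions can be long and thin, so the area bound alone does not control the edge length. I would try first to use the primitive direction $(a,b)$ with $b\approx ka$ and the fact that the lattice lines parallel to the edge are spaced $1/\sqrt{a^2+b^2}$ apart. The depth of the cap below the edge, measured perpendicular to it, is $\asymp a^2/\rho$ after the shear, and the next parallel lattice line, which contains points spaced $\sqrt{a^2+b^2}$ apart, must miss the cap. This forces $a^2/\rho\ll 1$ roughly when the chord along that line exceeds $a$. Balancing these inequalities carefully should yield $a\ll\rho^{2/3}$ for all but a negligible proportion of edges.
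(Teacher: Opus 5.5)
Citing \cite{Santos} for the upper bound is fine; the paper also reproves it via Lemma~\ref{l:narrow} and the divisor sum. The lower bound, however, has a genuine gap, and it starts with the scale. In your frame $\rho=X^3/N$ is $1/y''$, so a chord of horizontal length $\ell$ cuts off a cap of height $\asymp\ell^2/\rho$ and area $\asymp\ell^3/\rho$. Lattice-free caps typically have area $O(1)$, so the typical edge has horizontal length $\rho^{1/3}\asymp XN^{-1/3}$, not $\rho^{2/3}$; a chord of length $\rho^{2/3}$ cuts off a cap of area $\asymp\rho$. With your $\ell=X^2N^{-2/3}$ there are $X/\ell=N^{2/3}/X$ blocks, not $N^{1/3}$. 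So even a complete proof that every edge has horizontal length $\ll\ell$ would give only $\sum_X N^{2/3}/X\asymp N^{1/3}$ vertices, which is the bound of \cite{Santos} without the $\log N$.

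With the correct scale, the key claim is false edge by edge. For an integer $m\ge1$ every lattice point of $H_N$ satisfies $mx+y\ge 2\sqrt{mN}$. Hence the lattice points of $H_N$ on the line $mx+y=\lceil 2\sqrt{mN}\rceil$ form an edge near $x=\sqrt{N/m}$ whose horizontal length is $\asymp N^{1/4}m^{-3/4}$ (unless $\lceil 2\sqrt{mN}\rceil-2\sqrt{mN}$ is tiny). The typical length there is only $N^{1/6}m^{-1/2}$.

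Your assertion that the region has width $\le c/|p|$ in the normal direction also fails whenever the flatness direction of the cap is a short lattice vector $q\neq p^\perp$; this is Case~3 in the proof of Theorem~\ref{th:area}. So what you need is not that all but a negligible proportion of \emph{edges} are short, but that the long edges have small total \emph{length}. A bound on their number does not give this. Proving it requires an averaged count over primitive directions under the slope constraint, together with the flatness theorem, and ``balancing the inequalities'' does not supply it.

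Done that way, your dyadic plan does work. The estimates in the proof of Theorem~\ref{th:area}, combined with $|E_p|^3\ll r\,\Area C^p$, give $\sum|E_p|^3\ll R_jN^{1/3}$ over the edges with $x_p\in I_j$. These edges have total length $\gg|I_j|$, so H\"older's inequality forces $\gg N^{1/3}$ edges per range.

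Your first route does not close either. You would need $N/x$ to lie within $\asymp N^{1/3}/X$ \emph{below} an integer for some $x$ in essentially every interval of length $XN^{-1/3}$; note the sign, since if $\{N/x\}$ is small the nearby lattice point lies outside $H_N$. The expected number of such $x$ per interval is only $O(1)$. Even then, you would still have to show that those points are vertices, which is the real difficulty.

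The paper sidesteps all of this. It counts the $\frac13\Delta\log N+O(N^{1/3})$ lattice points of $S^*$ globally via the divisor sum. It shows that the non-vertices attached to a vertex lie on at most two rays, since otherwise a lattice triangle in $S$ of area $<\frac12$ appears. It then bounds the non-vertices attached along a primitive direction $(a,b)$ by $4\sqrt{\Delta/ab}$ and sums over at most $2|V|$ directions. This gives $|NV|\ll\sqrt{N^{1/3}|V|\log N}$, which forces $|V|\gg N^{1/3}\log N$.
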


A similar question is considered in our earlier paper \cite{BalBar}. Writing $B^2$ for the Euclidean unit disk centered at the origin the set $P_r=I(rB^2)$ is a convex lattice polytope, the integer hull of $rB^2$. It is shown in \cite{BalBar} that 
\begin{equation}\label{eq:BalB}
 0.33 r^{2/3} < f_0(P_r) < 5.55 r^{2/3}.   
\end{equation}
Most likely the limit $r^{-2/3}f_0(P_r)$ exists. Balog and Deshoullier \cite{BalD}
proved that the average of $r^{-2/3}f_0(P_r)$ in the interval $[R,R+M]$ tends to an explicit constant 3.453.. as $R \to \infty$ and $M \to\infty$ (for instance $M=\log R$). The upper bound in (\ref{eq:BalB}) is easier and follows from Andrews theorem (Theorem~\ref{th:Andr} below). The lower bound is more difficult and the method of its proof will be used here with suitable modifications. But the case of hyperbola is more involved because the curvature changes from $N^{-1/2}$ to $N^{-2}$ while it is constant for the disk.

\medskip
We observe that $I(H_N)$ has few, about $2N^{1/3}$ vertices outside a square $[1,N_2]^2$, where $N_2$ is about $N^{2/3}$, because we can actually list these vertices. They are
\[
(x,y) = (k,\lceil\frac Nk\rceil),\quad 1\leq k\leq N^{1/3},
\]
and by symmetry
\[
(x,y) = (\lceil\frac Nk\rceil,k),\quad 1\leq k\leq N^{1/3}.
\]
Indeed, the integer point $(1,N)$ is a vertex and $(k,\lceil\frac Nk\rceil)$ is certainly a vertex whenever $(k-1,\lceil\frac N{k-1}\rceil)$ is a vertex and
\[
\frac N{k+1} > 2\lceil\frac Nk\rceil - \lceil\frac N{k-1}\rceil. 
\]
This inequality follows if $k\leq N^{1/3}$. We mention that the above list of vertices is practically the lower bound of Alc\'antara et al. ~\cite{Santos}. For later use we point out that with
\begin{equation}\label{eq:N_12}
    N_1=\lfloor N^{1/3}\rfloor =N^{1/3}+O(1),\quad N_2=\lceil \frac N{N_1} \rceil=N^{2/3}+O(N^{1/3}),
\end{equation}
the points $(N_1,N_2)$ and $(N_2,N_1)$ are vertices of $I(H_N)$.
Introducing the notation $Q_N=H_N\cap [1,N_2]^2$ another form of Theorem~\ref{th:main} is the following.

\begin{thm}\label{th:main2}
\[
N^{1/3} \log N \ll f_0(I(Q_N)) \ll N^{1/3} \log N.
\]
\end{thm}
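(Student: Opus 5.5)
The plan is to split $Q_N$ dyadically along the hyperbola and to work in each piece at the natural scale set by the local curvature. By symmetry in the line $y=x$ it suffices to treat the arc with $N^{1/2}\le x\le N_2$. For $j=0,1,\dots,J\approx \tfrac16\log_2 N$ let $A_j$ be the part of the boundary chain of $I(Q_N)$ whose vertices have $x\in[2^jN^{1/2},2^{j+1}N^{1/2})$. On this range the slope of $y=N/x$ is about $-4^{-j}$ and the curvature is $\kappa_j\asymp 2^{-3j}N^{-1/2}$. Since there are $J\asymp\log N$ blocks, Theorem~\ref{th:main2} (and with it Theorem~\ref{th:main}, by the listing of the $O(N^{1/3})$ outer vertices given above) will follow once we show that \emph{each block contains $\asymp N^{1/3}$ vertices}, uniformly in $j$. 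As a heuristic check, an arc of length $L$ with curvature $\kappa$ should carry about $L\kappa^{1/3}$ vertices of the integer hull. Here $L_j\asymp 2^jN^{1/2}$, so $L_j\kappa_j^{1/3}\asymp 2^jN^{1/2}\cdot 2^{-j}N^{-1/6}=N^{1/3}$, independently of $j$.

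For the upper bound I would use Andrews' theorem (Theorem~\ref{th:Andr}) or the standard counting behind it, applied block by block. One route is to map the block by the unimodular shear $(x,y)\mapsto(x,\,y+m x)$ with $m\approx 4^{-j}$ rounded to a convenient rational. A cleaner route is the direct argument: consecutive edges of a convex lattice chain have distinct primitive directions. The total turning of the block is $\asymp 4^{-j}$, its length is $L_j$, and any chain of length $L$ with total turning $\theta$ uses at most $\ll L^{2/3}\theta^{1/3}$ distinct primitive directions. This gives $\ll (2^jN^{1/2})^{2/3}(4^{-j})^{1/3}=N^{1/3}$ per block, and summing over the blocks gives $O(N^{1/3}\log N)$.

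The lower bound is the main obstacle, and I would adapt the method of \cite{BalBar} for the disk lower bound. Within block $j$, I would look at primitive directions $(q,-p)$ with $p/q\approx N/x^2$ and denominators $q\le Q_j$, where $Q_j$ is chosen so that the supporting lines of the hull in these directions are well separated. For each such direction the supporting line touches $I(Q_N)$ at a vertex. The task is to show that \emph{many distinct vertices arise}, i.e.\ $\gg N^{1/3}$ of them per block. The key tool is the ``gap'' (cap) estimate: the cap of $H_N$ cut off by a lattice line of direction $(q,-p)$ at lattice-distance $1$ from the tangent has width $\asymp (q^{-2}\kappa_j^{-1})^{1/2}$-type size. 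Two directions $p/q$ and $p'/q'$ that are Farey neighbours must produce different vertices as soon as the corresponding tangent points are farther apart than these cap widths. Counting Farey fractions in the slope interval $[4^{-j-1},4^{-j}]$ with denominators of size $\asymp Q_j\approx N^{1/3}2^{j}$ after normalising by the local scale, and discarding the fractions whose caps overlap (a positive proportion survives, by an averaging or second-moment argument as in \cite{BalBar}), gives $\gg N^{1/3}$ vertices per block.

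The delicate point is to make these constants uniform in $j$ while the curvature varies by a factor $N^{1/2}$ across the whole range. I would first check this by rescaling each block by the diagonal map $(x,y)\mapsto(2^{-j}x,2^{j}y)$. This map preserves $H_N$ but not $\ZZ^2$, so I would instead combine it with a unimodular shear sending the average slope $-4^{-j}$ to a slope of order $1$. After that, every block looks like a disk-type arc of radius $\asymp N^{1/2}$ up to bounded distortion, and the estimates of \cite{BalBar} apply with absolute constants. Summing over the $\asymp\log N$ blocks completes the lower bound.
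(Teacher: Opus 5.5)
Your upper bound is sound. Counting distinct primitive edge directions block by block (length $\asymp 2^jN^{1/2}$, turning $\asymp 4^{-j}$, hence $\ll N^{1/3}$ directions per block) is essentially the Alc\'antara et al.\ argument recalled in the paper. The paper's own route is different: it confines all vertices to $N\le xy\le N+2N^{1/3}$ by Minkowski (Lemma~\ref{l:narrow}) and then counts lattice points there with the divisor sum.

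The lower bound, however, has a genuine gap.

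\emph{The uniformity reduction cannot exist.} The convex hull of the $j$-th arc contains a horizontal segment at integer height of length $\asymp 2^jN^{1/2}$, hence $\asymp 2^jN^{1/2}$ collinear lattice points. A unimodular affine map preserves this. Anything within bounded distortion of an arc of radius $\asymp N^{1/2}$ with turning $\asymp 1$ has diameter $O(N^{1/2})$, so it carries only $O(N^{1/2})$ collinear lattice points. Equivalently, $\mathrm{diag}(2^{-j},2^j)$ sends $\ZZ^2$ to a lattice with a vector of length $2^{-j}$, and no bounded change of coordinates repairs that. So \cite{BalBar} cannot be imported with absolute constants.

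\emph{The Farey step is quantitatively off, and its key assertion is unsupported.} With denominators up to $Q_j\approx N^{1/3}2^j$ there are $\asymp Q_j^2 4^{-j}\asymp N^{2/3}$ fractions per block. That is more than the $O(N^{1/3})$ vertices your own upper bound allows, so ``a positive proportion survives'' is false. Moreover, the cap at lattice distance one from the tangent in direction $(q,-p)$ has chord $\asymp\sqrt{r/q}$, not $\sqrt r/q$. It is guaranteed to contain a lattice point (the spacing is $\asymp q$) only when $q\lesssim r^{1/3}$. Beyond that, the supporting vertex need not lie near the tangent point at all.

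That threshold is exactly the missing idea, and with it no blocks, rescaling or averaging are needed. For a primitive direction $(a,-b)$, tangency is at $x=\sqrt{aN/b}$ and $r\asymp x^3/N$, so $q\lesssim r^{1/3}$ reads $ab\le cN^{1/3}$. The steps are then:

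\begin{enumerate}
\item The chord of $H_N$ on $bx+ay=k$ has $x$-projection $\sqrt{k^2-4abN}/b$, which is $\ge a$ once $k^2\ge 4abN+(ab)^2$.
\item Since $(ab)^2/(4\sqrt{abN})\le c^{3/2}/4$, the minimum $k^*$ of $bx+ay$ over $I(H_N)$, attained at a vertex $v_{a,b}$, satisfies $k^*<\sqrt{4abN}+2$.
\item This puts $v_{a,b}$ in the window $|x-\sqrt{aN/b}|\le 3(abN)^{1/4}/b$.
\item For slope-consecutive directions $b/a<b'/a'$ one has $b'/a'-b/a\ge 1/(aa')$. Using $a\le\sqrt{wa/b}$ with $w=cN^{1/3}$, a short computation shows the windows are disjoint when $c$ is a small absolute constant.
\end{enumerate}

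Thus the $v_{a,b}$ are distinct and have $x$-coordinate in $[N_1,N_2]$. Fact~\ref{fa:divisors} then gives $f_0\ge F(cN^{1/3})\gg N^{1/3}\log N$.

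For comparison, the paper counts the $\asymp\Delta\log N$ lattice points in the strip of width $\Delta=\frac12N^{1/3}$. It then uses Claim~\ref{cl:triangl} and the bound $4\sqrt{\Delta/(ab)}$ per direction to show that there are only $O(\sqrt{N^{1/3}|V|\log N})$ non-vertices.
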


Our second main result is about how well $I(H_N)$ approximates $H_N$ when the approximation is measured by the area $H_N$ missed by $I(H_N)$. But this area is infinite because $H_N \setminus I(H_N)$ contains a large part of the set $\{(x,y)\in \RR^2: x\in [0,\infty), y\in [0,1)\}$. So it is better to consider $(H_N \setminus I(H_N))\cap [1,N]^2$. The area of this set is of order $N$; see the remark at the end of Section~\ref{sec:prepa}. So we rather work with $Q_N$ again and define
\[
A_N=\Area(Q_N \setminus I(Q_N)).
\]

\begin{thm}\label{th:area}
\[ N^{1/3} \log N \ll A_N \ll N^{1/3} \log N.\]
\end{thm}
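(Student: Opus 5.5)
We plan to prove Theorem~\ref{th:area} by splitting $Q_N$ into dyadic pieces along the hyperbola and comparing, on each piece, the missed area with the number of edges of $I(Q_N)$ there. By the symmetry $(x,y)\mapsto(y,x)$ it is enough to treat the part of the chain with $N^{1/2}\le x\le N_2$. We split this range into dyadic blocks $X< x\le 2X$ with $N^{1/2}\le X\le N^{2/3}$, which gives about $\frac16\log_2 N$ blocks. On such a block the hyperbola $y=N/x$ has slope $-N/x^2\asymp -N/X^2$ and curvature $\kappa\asymp N/X^3$ (up to constants, since the slope is at most of order one there). The arc length is $\asymp X$ when $X\gg N^{1/2}$.

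The goal is to show that in each dyadic block the missed area is $\asymp N^{1/3}$. Summing over the $\asymp\log N$ blocks then gives both bounds.

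\emph{Upper bound.} Each edge of $I(Q_N)$ is a primitive lattice segment $[p,q]$ with $q-p=(a,-b)$, where the lattice line through $p,q$ supports the lattice points of $H_N$. The region of $H_N$ cut off by this edge is a cap of the hyperbola whose width $w$ is essentially at most the lattice width $1/|(a,-b)|$. Its area is therefore $\asymp w^{3/2}\kappa^{-1/2}$. Equivalently, the missed area is bounded by roughly $\sum_{\text{edges}}(\text{length})\cdot(\text{width})$, and we bound the width by $\min(\kappa\,\ell^2,\,1/\ell)$ for an edge of length $\ell$. Maximising over edge lengths, the worst case is $\ell\asymp\kappa^{-1/3}$, where each cap has area $O(1)$. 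Using Andrews' theorem (Theorem~\ref{th:Andr}) in the form ``the block contains $\ll X\kappa^{1/3}=N^{1/3}$ vertices'', and summing cap areas via convexity of $\ell\mapsto \ell\min(\kappa\ell^2,1/\ell)$, we get a missed area $\ll N^{1/3}$ per block.

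More carefully, the missed area in a block is at most the area between the arc and the convex hull of the lattice points within distance $\delta$ of it, with $\delta\asymp\kappa^{1/3}$. A standard counting argument bounds this by $\delta\cdot X\asymp (N/X^3)^{1/3}X=N^{1/3}$. The ends of the range, near $x=N^{1/2}$ and near $N_2$ where the slope leaves $[-1,-N^{-1/3}]$, contribute $O(N^{1/3})$ and can be absorbed.

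\emph{Lower bound.} This is the main obstacle, and we will adapt the method of \cite{BalBar} for the disk. On a block we must find many edges of $I(Q_N)$ that are short, meaning of length $\asymp\kappa^{-1/3}=XN^{-1/3}$. Each such edge leaves a cap of area $\gg 1$. We intend to do this by choosing directions: for primitive vectors $(a,b)$ with $b/a\in[N/(4X^2),N/X^2]$ and $|(a,b)|\le c\kappa^{-1/3}$, there are $\asymp (\kappa^{-1/3})^2\cdot N/X^2\cdot(X^2/N)$ such vectors, and this is $\asymp N^{1/3}\cdot$(const) once the density of slopes is accounted for. For each such vector, the tangent line to $H_N$ with normal $(b,a)$ is at distance at most one lattice step, $1/|(a,b)|\gg\kappa^{1/3}$, from the first lattice line parallel to it that meets $\ZZ^2\cap H_N$. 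This forces a cap of width $\gg 1/|(a,b)|$ and area $\gg |(a,b)|^{-3/2}\kappa^{-1/2}$ missing from $I(Q_N)$. We will also need disjointness, up to bounded overlap, of the caps belonging to different directions; this follows from the angular separation of Farey neighbours, exactly as in \cite{BalBar}. Summing over the $\asymp N^{1/3}$ directions in the block gives $\gg N^{1/3}$ per block, and summing over the $\asymp\log N$ blocks gives $A_N\gg N^{1/3}\log N$.

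The delicate points are the following. First, the cap associated with a direction may be tiny if the tangent line happens to nearly pass through lattice points, so we must average this distance over directions; this is where the equidistribution or averaging argument from \cite{BalBar} is used. Second, the non-constant curvature inside a block only changes constants.
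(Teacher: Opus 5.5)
\textbf{Upper bound.} Your dyadic blocks and the per-block target $\ll N^{1/3}$ are the right framework. The gap is in the key step: you bound the height $h$ of the cap cut off by an edge of length $\ell$ by $\min(\kappa\ell^2,1/\ell)$, so every cap has area $O(1)$, and Andrews' $\ll N^{1/3}$ edges per block then finish. The bound $h\ll 1/\ell$ is false, and a single cap can be large.

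Take $N=n^2+1$. No lattice point of $H_N$ has $x+y\le 2n$, while the line $x+y=2n+1$ contains about $2\sqrt n$ lattice points of $H_N$. So this line carries an edge of $I(Q_N)$ of length $\asymp N^{1/4}$. Its cap has height about $1/\sqrt2$ and area $\asymp N^{1/4}$, whereas $1/\ell\asymp N^{-1/4}$. Since $\kappa^{1/3}\asymp N^{-1/6}$ there, the missed region does not stay within distance $\delta\asymp\kappa^{1/3}$ of the arc either, so your ``$\delta\cdot X$'' count also fails.

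Even $h\ll 1/|p|$ for the edge direction $p$ is not automatic. The flatness theorem gives lattice width $\le 1+2\sqrt3$ only in \emph{some} primitive direction $q$, which need not be $p^\perp$.

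The missing idea is to stop bounding caps one at a time and to sum over \emph{distinct} directions. When $q=p^\perp$ you get $\Area C^p\ll\sqrt r\,|p|^{-3/2}$ with $|p|\ll r^{1/3}$. Because different edges have different primitive directions, the block total is $\ll\sqrt{R_j}\sum|p|^{-3/2}$ over lattice vectors with $|p|\ll R_j^{1/3}$ in a sector of slope $\asymp\tan\Phi_j$. That sum is $\ll R_j^{2/3}\tan\Phi_j\asymp N^{1/3}$. When $q\ne p^\perp$, the paper attaches to $C^p$ a pair $(q,k)$, shared by at most two caps, with $\Area C^p\ll\sqrt{R_j}\,(k|q|)^{-3/2}$; this reduces to the same sum. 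Caps of height $\le r^{-1/3}$ are handled by projecting to the $x$-axis. Your heuristic that the critical scale is $\ell\asymp\kappa^{-1/3}$ is correct, since that is where the sum concentrates, but it has to be realised through this counting.

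\textbf{Lower bound.} As sketched, this is not yet an argument.

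First, saying the tangent line lies ``at distance at most one lattice step'' from the first lattice line meeting $\ZZ^2\cap H_N$ is an \emph{upper} bound on the cap width. It cannot force width $\gg 1/|(a,b)|$. The actual width in direction $(b,a)$ is $(k-\sqrt{4abN})/|(a,b)|$, where $k$ is the least value of $bx+ay$ on $\ZZ^2\cap H_N$, and this can be far smaller.

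Second, you defer this to an averaging or equidistribution statement for $\sqrt{4abN}$ modulo $1$ over the $\asymp N^{1/3}$ directions of a block. That statement is the entire difficulty, and it is not proved. The disk-case method this paper adapts is the thin-strip count behind the lower bound in Theorem~\ref{th:main}, not an equidistribution result.

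Third, the bounded-overlap claim is unsupported. A supporting line in a non-edge direction touches $I(Q_N)$ at a single vertex, and its cap lies in the union of the two edge caps at that vertex. So many of your chosen directions can pile onto the same few caps unless you already know the edges are spread out.

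The fix is to use Theorem~\ref{th:main2} rather than reprove it. There are $R\gg N^{1/3}\log N$ edges with pairwise distinct primitive directions $(a,-b)$, each of length $\ge|(a,-b)|$. Hence the pairwise disjoint regions between the edges and $H^1$ over their $x$-projections have areas $\gg(ab)^{3/2}N^{-1/2}$. Minimising $\sum(ab)^{3/2}$ over $R$ distinct primitive vectors via Fact~\ref{fa:divisors} gives a total $\gg w^{5/2}\log w\cdot N^{-1/2}$ with $w\asymp N^{1/3}$, that is, $\gg N^{1/3}\log N$. This argument is global, so no per-block lower bound is needed.
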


\bigskip
\section{Connections to lattice polytopes and random polytopes}

The theorems mentioned in this section serve to support our motivations, but are not explicitly used in the proof of our main results.

\medskip
A lattice polytope $P$ in $\RR^d$ is a convex polytope whose vertices belong to the lattice $\ZZ^d$. A famous theorem of G. E. Andrews \cite{Andr} says the following.

\begin{thm}\label{th:Andr} For a lattice polytope $P \subset \RR^d$ with $\vol P>0$
\[
f_0(P) \ll (\vol P)^{\frac {d-1}{d+1}}
\]
with the implied constant depending only on $d$.
\end{thm}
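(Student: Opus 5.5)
The plan is to prove Theorem~\ref{th:Andr} by combining the Bárány--Larman economic cap covering with lattice-point counting inside caps. We argue by induction on $d$. The case $d=1$ is trivial, and $d=2$ can also be done directly: the edge vectors of $P$ are distinct primitive vectors, and a convex lattice polygon with $n$ edges has area $\gg n^3$. Write $V=\vol P$. We may assume $V$ is large, since a lattice polytope with $\vol P>0$ has $V\ge 1/d!$, and for bounded $V$ the number of vertices is bounded by Blichfeldt/Minkowski-type lattice point counts combined with flatness.

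\textbf{Step 1 (every vertex has a small empty cap).} For a vertex $v$ of $P$, let $P_v=\conv\bigl((P\cap\ZZ^d)\setminus\{v\}\bigr)$. Choose a hyperplane $H_v$ strictly separating $v$ from all other vertices, placed as close to $P_v$ as possible. This gives a cap $C_v=P\cap H_v^+$ whose only lattice point is $v$. We want $\vol C_v\le c_d$, which would put $v$ in the wet part $P(\varepsilon)$ with $\varepsilon=c_d/V$. A convex body whose only lattice point is $v$ can be large only if it is flat. So we apply Khinchine's flatness theorem to the body $C_v'=C_v\cup(2v-C_v)$, or to the Macbeath region at a point of $C_v$ near $v$. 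This gives a primitive functional $\ell$ such that $C_v$ lies in $O_d(1)$ consecutive lattice hyperplanes $\ell=\text{const}$. If $C_v$ is flat in this sense we do not need a volume bound for it: its vertices are handled through the lower-dimensional slices in Step 3.

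\textbf{Step 2 (economic cap covering).} We apply the Bárány--Larman covering theorem with $\varepsilon=c/V$. The wet part $P(\varepsilon)$ is covered by caps $C_1,\dots,C_m$ with
\[
m\ll \varepsilon^{-\frac{d-1}{d+1}}\ll V^{\frac{d-1}{d+1}},\qquad \vol C_i\ll \varepsilon V\ll 1 .
\]
By Step 1 every vertex of $P$ lies in some $C_i$, or else in a flat cap, which is absorbed the same way. It therefore suffices to show that each cap of volume $O(1)$ contains $O_d(1)$ vertices of $P$, at least on average over $i$.

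\textbf{Step 3 (vertices in a cap of bounded volume).} This is the main obstacle. The vertices of $P$ lying in $C_i$ are in convex position, but a convex body of volume $O(1)$ can still contain many lattice points if it is thin. For example, a $(d-1)$-dimensional lattice polygon can sit inside a very flat cap. The first thing to try is Khinchine flatness applied to $\conv(C_i\cap\ZZ^d)$, or to its Macbeath enlargement, which has volume $O(1)$. This shows that $C_i\cap\ZZ^d$ lies on $O_d(1)$ parallel lattice hyperplanes. Each slice is a $(d-1)$-dimensional convex lattice polytope in a lattice of determinant $\ge 1$. The vertices of $P$ on a slice are vertices of that slice, so the induction hypothesis bounds their number by $\ll (\vol_{d-1}\text{slice})^{\frac{d-2}{d}}$. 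The difficulty is summing these slice contributions over all caps: we need $\sum_i \vol_{d-1}(\text{slices of }C_i)^{\frac{d-2}{d}}\ll V^{\frac{d-1}{d+1}}$. To get this, I would refine the covering by splitting the caps dyadically according to their width in the flat direction. A cap of width $w$ and volume $O(1)$ has slice area $\ll 1/w$. Counting caps of each width through the cap-covering volume estimate should then make the dyadic sum converge.

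If this bookkeeping fails, the fallback is Andrews' original route. One bounds the number of facets by the same cap argument, using that each facet is a lattice polytope of $(d-1)$-volume $\gg 1$. One then bounds vertices by induction on faces, since each vertex lies in a facet, and uses the surface-area estimate $\sum_F \vol_{d-1}F\ll V^{(d-1)/d}\cdot(\text{diameter factors})$ together with Hölder's inequality to reach the exponent $\frac{d-1}{d+1}$.
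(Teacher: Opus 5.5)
The paper does not prove this statement. Theorem~\ref{th:Andr} is quoted from Andrews as background, and Section~2 says explicitly that these results are not used in the proofs. So there is nothing to compare with, and your proposal has to stand on its own. It does not: the step that carries all the weight, Step~3, is only announced.

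\textbf{What Steps 1--2 actually give.} Step~1 is unnecessary. A vertex lies on $\partial P$, where $v$ vanishes, so every vertex lies in every wet part $P(v\le\varepsilon)$. The maximal cap $C_v$ you build can have large volume already for $d=2$: in $\conv\{(0,0),(N,1),(N,2)\}$ with $N$ odd, the only lattice point with $x<N/2$ is the origin. But this is beside the point. What Steps 1--2 deliver is the following:
\begin{itemize}
\item there are $m\ll V^{(d-1)/(d+1)}$ caps $C_i$ of volume $<1/d!$ covering all vertices;
\item in each $C_i$ the lattice points lie in one hyperplane, because $d+1$ affinely independent lattice points span volume $\ge 1/d!$.
\end{itemize}
Flatness theorems concern lattice-free bodies and do not apply to $\conv(C_i\cap\ZZ^d)$. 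The theorem is then equivalent to
\[
\sum_i \#\{\text{vertices of }P\text{ in }C_i\}\ll V^{(d-1)/(d+1)},
\]
and this is exactly where your proposal stops.

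\textbf{Why Step 3 does not go through.} Per cap, nothing better than a trivial bound is available. Take the pyramid $\conv(Q\times\{0\}\cup\{(0,0,1)\})$ over a convex lattice polygon $Q$ with $\asymp(\Area Q)^{1/3}$ vertices, for instance $Q=P_r$ from (\ref{eq:BalB}). Its cap $\{z\le 1/(12\Area Q)\}$ has volume $<1/6$ and contains $\asymp V^{1/3}$ vertices. This does not contradict the theorem, but it does refute the claim that each cap holds $O(1)$ vertices. In general, induction gives only $\ll V^{(d-2)/d}$ vertices per cap, since $\conv(C_i\cap\ZZ^d)$ can have lattice volume up to about $dV$. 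Multiplied by $m$, that is far too much.

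\textbf{The dyadic bookkeeping.} This has no estimate behind it. The cap covering controls the number and volumes of caps, but gives no count of caps of a given width. Meanwhile your bound ``slice area $\ll 1/w$'' degenerates as $w\to 0$. A cap of volume $O(1)$ can carry a lower-dimensional lattice polytope of lattice volume $\asymp V$, as in the pyramid. Nothing prevents the hulls $\conv(C_i\cap\ZZ^d)$ of different caps from overlapping heavily, so there is no disjointness against which to sum the inductive bounds. The inequality $\sum_i \mathrm{lvol}_{d-1}(\conv(C_i\cap\ZZ^d))^{(d-2)/d}\ll V^{(d-1)/(d+1)}$ is essentially the theorem itself.

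\textbf{The fallback fails for a similar reason.} Euclidean surface area is not bounded by volume for lattice polytopes: $\conv\{0,e_1,e_2,(N,0,1)\}$ has volume $1/6$ and surface area $\asymp N$. The correct lattice-normalized bound is $\sum_F \mathrm{lvol}_{d-1}(F)\ll V$, obtained from pyramids over facets from lattice points of $P$. Combined with induction on facets and H\"older, it gives only $f_0(P)\ll f_{d-1}(P)^{2/d}V^{(d-2)/d}$. Even granting $f_{d-1}(P)\ll V^{(d-1)/(d+1)}$, which is as hard as the theorem, this yields the exponent $\frac{d^2+d-4}{d(d+1)}>\frac{d-1}{d+1}$ for $d\ge 3$.

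What is missing is a genuine mechanism that charges the vertices collected by a flat cap to a portion of $P$, or of its dual, that is not charged again.
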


\medskip
Write $P_r^d$ for the integer convex hull of $rB^d$ where $B^d$ is the Euclidean unit ball in $\RR^d$ centered at the origin. Extending the result $(\ref{eq:BalB})$
B\'ar\'any and Larman \cite{BarLar98} proved that
\[
r^{\frac{d(d-1)}{d+1}} \ll f_0(P_r^d) \ll r^{\frac{d(d-1)}{d+1}}.
\]
This shows the exponent in Andrews's theorem is best possible. 

\medskip
As $I(Q_N)$ is a lattice polygon whose area is of order $N^{4/3}$ Andrews's theorem implies that $f_0(I(Q_N)) \ll N^{4/9}$, larger than the bound $N^{1/3}\log N$ in Theorem~\ref{th:main}. This is partially explained by the analogy between lattice polytopes and random polytopes, that we describe below.

\medskip
Let $K \subset \RR^d$ be a convex body (compact convex set with nonempty interior) and let $X_n$ be a random sample of $n$ points chosen from $K$ independently and with the uniform distribution. Then $K_n=\conv X_n$ is a {\sl random polytope} inscribed in $K$.

\medskip
Assuming $K$ is large and contains $n=|K\cap \ZZ^d|$ lattice points and $n$ is also large, the integer hull $I(K)$ and the random polytope $K_n$ behave quite similarly. This is not a rigorous statement; this is just a guiding principle of what to expect.

\medskip
Random polytopes have been studied for a long time, starting with Sylvester's four-point problem~\cite{Syl}. Their systematic study began with R\'enyi and Sulanke \cite{RenSul}. What we need here are some results on the expectation 
$\EE f_0(K_n)$ and on the missed volume, that is, on $\vol (K\setminus K_n)$, to be denoted by $E(K,n)$. We assume $\vol K=1$, in which case the probability measure
on $K$ coincides with the Lebesgue measure and $d\ge 2$. The first result we need is Efron's identity \cite{Efron}:

\begin{thm}\label{th:efron} For every convex body $K\subset \RR^d$ of volume one
\[
\EE f_0(K_n)=nE(K,n-1). 
\]
\end{thm}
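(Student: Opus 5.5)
The plan is to prove Efron's identity by computing $\EE f_0(K_n)$ as a sum of indicator expectations and using exchangeability of the sample. Let $X_n=\{x_1,\dots,x_n\}$ be the i.i.d.\ uniform points in $K$. Since the distribution is absolutely continuous, with probability one the points are distinct and in general position. Then $f_0(K_n)$ is exactly the number of indices $i$ such that $x_i$ is a vertex of $\conv X_n$. Moreover, $x_i$ is a vertex if and only if $x_i\notin \conv(X_n\setminus\{x_i\})$. Indeed, if $x_i$ lies in the hull of the others it is not an extreme point. Conversely, if $x_i$ is not in that hull, then $\conv X_n$ strictly contains $\conv(X_n\setminus\{x_i\})$, so $x_i$ is an extreme point and hence a vertex.

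So we write
\[
f_0(K_n)=\sum_{i=1}^n \mathbf 1\{x_i\notin \conv(X_n\setminus\{x_i\})\}\quad\text{a.s.},
\]
and take expectations. By symmetry of the i.i.d.\ sample, every summand has the same expectation, so
\[
\EE f_0(K_n)=n\,\Pr\bigl(x_n\notin \conv\{x_1,\dots,x_{n-1}\}\bigr).
\]

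The remaining step is to compute this probability by conditioning on $x_1,\dots,x_{n-1}$ and using Fubini. The point $x_n$ is independent of the others and uniform on $K$, and $\vol K=1$. Hence the conditional probability that $x_n$ falls outside $K_{n-1}=\conv\{x_1,\dots,x_{n-1}\}$ equals $\vol(K\setminus K_{n-1})$. Averaging over $x_1,\dots,x_{n-1}$ gives $\EE\vol(K\setminus K_{n-1})=E(K,n-1)$, which yields $\EE f_0(K_n)=nE(K,n-1)$.

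The only step needing care is the almost-sure identification of vertices with points outside the hull of the others. A boundary case such as $x_i$ lying on the boundary of $\conv(X_n\setminus\{x_i\})$ or coinciding with another point has probability zero, because it requires a random point to lie in a fixed lower-dimensional set (affine hyperplanes through $d$ others, or single points) of Lebesgue measure zero. Measurability of $\vol(K\setminus K_{n-1})$ as a function of the sample is standard, since the convex hull depends continuously on the points. With these remarks the argument is complete, and no step is a genuine obstacle.
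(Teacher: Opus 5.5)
Your argument is correct. It is exactly Efron's classical proof: write $f_0(K_n)$ almost surely as $\sum_i \mathbf 1\{x_i\notin\conv(X_n\setminus\{x_i\})\}$, use exchangeability to reduce to $n\Pr(x_n\notin K_{n-1})$, and apply Fubini with $\vol K=1$. The paper does not reprove this identity but simply cites Efron, so your approach matches the standard one the citation refers to; note that only distinctness of the points is actually needed almost surely, since boundary cases are already handled correctly by the indicator.
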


The function $v: K \to \RR$ is defined by 
\[
v(x)=\min\{\vol(K\cap H): x\in H \mbox{ and }H \mbox{ is a halfspace}\}.
\]
Note that the definition of $v$ also makes sense for unbounded convex sets. Here, of course, $K\cap H$ is a cap of $K$ cut off by the halfspace $H$. The level sets of $v$ are defined as usual: $K(v\le t)=\{x\in K: v(x)\le t\}$. This set is called the {\sl floating body} of $K$ with parameter $t$, and is convex because it is the intersection of halfspaces. The set $K(v\le t)$ is the {\sl wet part}. The name comes from the picture when $K$ is a 3-dimensional convex body containing $t$ units of water. It is known (see for instance \cite{Leicht}, \cite{BarLar88} and \cite{SchutW}) that when $K$ has smooth enough boundary, its Gauss-Kronecker curvature $\kappa$ is positive everywhere and $\vol K=1$, then the volume of the wet part satisfies 
\begin{equation}\label{eq:wetp}
\vol K(v\le t)=\mbox{const}(d)\left(\int\kappa^{\frac1{d+1}}dx\right)t^{\frac{2}{d+1}}(1+o(t))
\end{equation}
where the integration goes on the boundary of $K$ according to the surface area.

\medskip
The second result that we need is due to B\'ar\'any and Larman \cite{BarLar88}.

\begin{thm}\label{th:wetp} For every convex body $K\subset \RR^d$ of volume one
\[
\vol K(v\le 1/n) \ll E(K,n) \ll \vol K(v\le 1/n),
\]
where the implied constants only depend on $d$.
\end{thm}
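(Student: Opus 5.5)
The proof splits into two bounds of different difficulty. Both start from Fubini:
\[
E(K,n)=\int_K \mathbb{P}(x\notin K_n)\,dx .
\]

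\emph{Lower bound.} Let $x\in K(v\le 1/n)$. Then there is a halfspace $H\ni x$ with $\vol(K\cap H)\le 1/n$. If no sample point falls into the cap $K\cap H$, then $K_n$ lies in the complementary closed halfspace. Choosing $H$ so that $x$ lies in its interior (possible up to a null set, or by shrinking slightly), this gives $x\notin K_n$. Hence
\[
\mathbb{P}(x\notin K_n)\ge (1-1/n)^n\ge 1/4 \quad (n\ge 2),
\]
and integrating over the wet part gives $E(K,n)\ge \tfrac14 \vol K(v\le 1/n)$.

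\emph{Upper bound.} I split $K$ into layers according to the value of $v$. Write $\varepsilon=1/n$ and set
\[
L_0=K(v\le \varepsilon),\qquad L_k=K(2^{k-1}\varepsilon< v\le 2^k\varepsilon),\quad k\ge 1 .
\]
On $L_0$ I use the trivial bound $\mathbb{P}\le 1$. For the other layers I need two ingredients.

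First, a growth estimate for the wet part:
\[
\vol K(v\le \lambda\varepsilon)\ll \lambda^{c_d}\,\vol K(v\le\varepsilon)\quad\text{for } \lambda\ge 1 .
\]
To get this I would use the \emph{economic cap covering}. For each $t$ there are caps $C_1,\dots,C_m$, each of volume $\asymp t$, whose union covers $K(v\le t)$, while the union of their shrunken versions $C_i'$ (cut by a parallel hyperplane, of volume $\asymp t$) lies in $K(v\le t)$ and the $C_i'$ are pairwise disjoint. These caps are built by a maximal disjoint family of minimal caps of the floating body $K(v\ge t)$, using John-type comparison of a cap with its homothets.

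Second, a pointwise estimate on $L_k$:
\[
\mathbb{P}(x\notin K_n)\ll e^{-c2^k}.
\]
If $x\notin K_n$, some closed halfspace through $x$ misses $X_n$, so the cap it cuts off has volume at least $v(x)>2^{k-1}\varepsilon$ and is empty. The difficulty is that there are infinitely many such halfspaces, so I cannot take a union bound over them directly. The plan is to show that every cap through $x$ of volume at least $t=v(x)$ contains one of boundedly many fixed convex sets of volume $\gg t$ attached to $x$. I would build these inside a minimal cap $C(x)$ at $x$: take $C(x)$, of volume $v(x)$, and cover its base neighbourhood by $O_d(1)$ sets. Then the union bound gives
\[
\mathbb{P}(x\notin K_n)\le c_d\,(1-c\,2^k\varepsilon)^n\le c_d\,e^{-c2^k}.
\]

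Summing over the layers,
\[
E(K,n)\le \vol K(v\le\varepsilon)+\sum_{k\ge1} c_d\,e^{-c2^k}\,\vol K(v\le 2^k\varepsilon)
\ll \vol K(v\le\varepsilon)\sum_{k\ge 0} 2^{kc_d}e^{-c2^k},
\]
and the last sum is a constant depending only on $d$. This gives the upper bound.

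I expect the main obstacle to be this pointwise estimate. The point is to replace the continuum of empty caps through $x$ by finitely many test regions of volume comparable to $v(x)$, uniformly over all convex bodies. This needs the affine-invariant geometry of caps: a cap of volume $\lambda t$ is contained in a bounded dilate of a cap of volume $t$, with constants depending only on $d$. My first attempt would be to normalize $C(x)$ by an affine map to a standard position, using John's theorem, and argue by compactness in that normalized picture.
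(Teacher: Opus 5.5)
The paper does not prove this theorem; it quotes it from B\'ar\'any--Larman. Your outline follows the architecture of their argument: an empty cap for the lower bound; for the upper bound, dyadic layers in $v$, polynomial growth of $\vol K(v\le\lambda\varepsilon)$ from the economic cap covering, and a pointwise bound $\mathbb{P}(x\notin K_n)\le c_d(1-c\,v(x))^n$. Your lower bound is fine. Take $H$ closed: if $X_n\cap H=\emptyset$, then $K_n$ lies in the open complement, so no perturbation is needed.

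The gap is the pointwise estimate. You correctly single it out as the crux, but only sketch it, and the sketch cannot work as stated. A minimal cap $C(x)=K\cap H_0$ has $x$ on $\partial H_0$; indeed $x$ is the centroid of its base. So the closed halfspace $\overline{\RR^d\setminus H_0}$ contains $x$ yet meets $C(x)$ only in the base, a null set. Halfspaces through $x$ close to it meet $C(x)$ in arbitrarily small volume. Hence no finite family of test sets of positive volume lying \emph{inside} $C(x)$ can have every halfspace through $x$ contain one of them. Some test sets must lie beyond the base, and an affine normalization of $C(x)$ plus compactness says nothing about $K$ there. What is missing is a quantitative statement that $K$ contains, around $x$ and on both sides of the base, a region of volume $\gg v(x)$.

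This is what the Macbeath region $M(x,\lambda)=x+\lambda\bigl((K-x)\cap(x-K)\bigr)$ supplies. The set $M(x,1)\subset K$ is symmetric about $x$. After an affine map putting it in John position, every closed halfspace containing $x$ contains one of the $2^d$ closed orthant pieces of $M(x,1)$, namely the one matching the sign pattern of its normal. Each piece has volume at least $2^{-d}d^{-d/2}\vol M(x,1)$. A union bound then gives $\mathbb{P}(x\notin K_n)\le 2^d\bigl(1-c_d\vol M(x,1)\bigr)^n$ with no compactness. The real geometric input is the comparison $v(x)\le c_d\vol M(x,1)$; the reverse, $\vol M(x,1)\le 2v(x)$, is trivial. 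B\'ar\'any--Larman derive this comparison for $v(x)$ below a dimensional threshold from the minimality of $C(x)$; deeper points are easy.

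Macbeath regions also underlie the economic cap covering, which you describe inaccurately. The disjoint pieces $C_i'$ cannot be parallel subcaps, and a maximal disjoint family of caps does not work either. In a square, covering $K(v\le t)$ near a vertex needs $\asymp\log(1/t)$ corner triangles of area $\asymp t$, and these and all their parallel subcaps contain that vertex. In B\'ar\'any--Larman the covering comes from a maximal family of points $x_i\in\partial K(v\ge t)$ with pairwise disjoint $M(x_i,\frac12)$.

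Finally, the growth estimate does not follow from covering and packing at a single level. You also need that every cap of volume $<t$ lies in some $C_i$. Write $C^\mu$ for the $\mu$-blow-up of a cap, meaning the cap in the same direction with $\mu$ times the width. Then $\vol C^\mu\le\mu^d\vol C$, and $D\subset C$ implies $D^\mu\subset C^\mu$ for $\mu\ge1$. Moreover, a cap of volume $s\in[t,\lambda t]$ lies in the $(2d\lambda+1)$-blow-up of its parallel subcap of volume $t/2$. Together these give $K(v\le\lambda t)\subset\bigcup_i C_i^{2d\lambda+1}$, hence $\vol K(v\le\lambda t)\ll\lambda^d\vol K(v\le t)$.
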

The parameter $t=1/n$ in this theorem corresponds to $t=\frac 1n \vol K$ when $\vol K$ is different from 1.

\medskip
Returning now to $Q_N$ we see that $Q_N$ contains about $n \approx N^{4/3}$ lattice points. With $K=Q_N$ and $v=v_K=v_{Q_N}$ we can determine the area of the wet part $K(v\le t)$ when $t=\frac 1n \Area K\approx 1$. Using formula (\ref{eq:wetp}) (or direct integration) gives
\[
N^{1/3}\log N \ll \Area Q_N(v\le 1) \ll N^{1/3}\log N. 
\]

Then for a random polytope $K_n$ inscribed in $Q_N$ with $n\approx N^{4/3}$ random points one would expect that $f_0(K_n)$ is about $N^{1/3}\log N$ and its missed area is about $N^{1/3}\log N$, too. If the analogy between random polytopes and lattice polytopes works, the same number of vertices for $I(Q_N)$ and the same missed area $A_N$ would be expected. We will see that this is indeed the case. 

\medskip
Yet, a word of warning is in place here: in both Theorems \ref{th:efron} and \ref{th:wetp} $K$ is a fixed convex body and $n$ goes to infinity. But in our case both $K=Q_N$ and $n\approx N^{4/3}$ depend on $N$. Still, we can determine the expectations of the number of vertices and the missed area of the random polytope $K_n$ when $K=Q_N$ and $n=N^{4/3}$. The outcome is that both $\EE f_0(K_n)$ and $E(K,n)$ are of order $n^{1/4} \log n$ or $N^{1/3}\log N$, as expected. We will not give the proof of these results here. \qed

\bigskip
\section{Proof of Theorem \ref{th:main}, the upper bound}

As it turns out, the vertices of $I(H_N)$ lie in a narrow strip above the hyperbola $xy=N$. This is the content of the following lemma.

\begin{lem}\label{l:narrow}If $(x,y)$ is a vertex of $I(H_N)$, then $N \le xy \le N+2N^{1/3}$.
\end{lem}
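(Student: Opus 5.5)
The lower bound $xy\ge N$ holds by definition, since vertices of $I(H_N)$ are lattice points of $H_N$. For the upper bound the plan is to show that a lattice point $(a,b)$ with $ab>N+2N^{1/3}$ is the midpoint of a lattice segment whose endpoints both lie in $H_N$. Then $(a,b)$ is not an extreme point of $I(H_N)$. We use the two directions $\pm(u,-v)$: the points $(a+u,b-v)$ and $(a-u,b+v)$, with $u,v\ge 0$ integers, not both zero, to be chosen. Their products are
\[
(a+u)(b-v)=ab+ub-va-uv,\qquad (a-u)(b+v)=ab-ub+va-uv .
\]
Both are at least $N$ as soon as $|ub-va|+uv\le ab-N$. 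It therefore suffices to find a nonzero integer vector $(u,v)$ with $u,v\ge 0$ and $|ub-va|+uv\le 2N^{1/3}$. We also need the endpoints to have positive coordinates, which holds if $u<a$ and $v<b$.

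\textbf{Step 1: Dirichlet approximation.} By symmetry assume $a\le b$. By Dirichlet's theorem, for every integer $Q\ge1$ there are integers $1\le u\le Q$ and $v$ with $|u\,(b/a)-v|<1/Q$, that is, $|ub-va|<a/Q$. Since $b/a>0$ and the approximation is good, $v\ge0$ and $v\approx ub/a$. The cost is then
\[
|ub-va|+uv \lesssim \frac aQ+\frac{Q^2 b}{a}.
\]
We optimize over $Q$. If $ab\approx N$, then with $b=N/a$ the expression is $a/Q+Q^2N/a^2$, minimized at $Q\approx (a^3/N)^{1/3}=a/N^{1/3}$, which gives cost about $N^{1/3}$. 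Tracking constants should yield the bound $2N^{1/3}$.

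\textbf{Step 2: edge cases.} If $a<N^{1/3}$, so that the optimal $Q$ is below $1$, take $(u,v)=(0,1)$. This only needs $ab-a\ge N$, i.e. $ab-N\ge a$, which holds because $a<N^{1/3}\le 2N^{1/3}<ab-N$. The endpoints are $(a,b\pm1)$, and $b-1\ge1$ since $b>N/a$ is large. Far from the hyperbola, where $ab$ is much larger than $N$, the slack $ab-N$ is larger still, so the trivial choice $(0,1)$ or $(1,0)$ works whenever $\min(a,b)\le ab-N$. Hence we may assume $\min(a,b)>ab-N>2N^{1/3}$, which keeps $ab$ close to $N$ (e.g. $ab\le N+a$, so $b\le N/a+1$). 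This is the regime where the Dirichlet computation applies.

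\textbf{Main obstacle.} The hardest step is the precise constant bookkeeping to get exactly $2N^{1/3}$. Three things must be controlled: the integrality of $Q$ (take $Q=\lceil a/N^{1/3}\rceil$), the replacement of $v$ by $ub/a+O(1/Q)$ inside the term $uv$, and the slack in $b\le N/a+1$. With $Q\le a/N^{1/3}+1$ we get $a/Q\le N^{1/3}$, and
\[
uv\le Q\Bigl(\frac{Qb}{a}+\frac1Q\Bigr)=\frac{Q^2b}{a}+1 .
\]
Bounding $Q^2b/a\le N^{1/3}(1+o(1))$ needs care when $a$ is close to $N^{1/3}$, where the rounding of $Q$ matters. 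If that case is tight, we would fall back on a direct choice there, such as $(u,v)=(1,\lfloor b/a\rceil)$ or the trivial vector. Finally, the constraints $u<a$ and $v<b$ hold because $Q\le a/N^{1/3}+1<a$ and $v\approx ub/a\le Qb/a<b$.
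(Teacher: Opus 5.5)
Your reduction is correct, and it is in effect a hand-made version of the paper's argument. The point $(a,b)$ is not a vertex once some nonzero $(u,v)$ with $u,v\ge 0$ satisfies $|ub-va|+uv\le D:=ab-N$; positivity of the two endpoints is then automatic. The directions $(0,1)$ and $(1,0)$ settle the case $\min(a,b)\le D$. Your Dirichlet box is a parallelogram inscribed in the set $\{w:(a,b)\pm w\in H_N\}=K-(a,b)$, which is exactly where the paper applies Minkowski's theorem.

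The gap is the quantitative step. It is the entire content of the lemma, you leave it open, and the choice of $Q$ you propose does not work.

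\begin{itemize}
\item The function $a/Q+Q^2N/a^2$ is minimized at $Q=a/(2N)^{1/3}$, not at $Q=a/N^{1/3}$, and its minimum is $3\cdot 2^{-2/3}N^{1/3}\approx 1.89N^{1/3}$. At your $Q=a/N^{1/3}$ its value is $2N^{1/3}$, not ``about $N^{1/3}$''.
\item Consequently, with $Q=\lceil a/N^{1/3}\rceil$ your estimates give only $|ub-va|+uv\le 2N^{1/3}+O(N^{2/3}/a)+1$. For $a\approx\sqrt N$ this is $2N^{1/3}+O(N^{1/6})$, which does not beat $D$ when $D=\lfloor 2N^{1/3}\rfloor+1$.
\item For $a$ slightly above $2N^{1/3}$ you get $Q=3$ and a bound near $(\tfrac23+\tfrac94)N^{1/3}\approx 2.9N^{1/3}$. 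This is the bottom of your regime; the trouble is not ``$a$ close to $N^{1/3}$'', which your reduction already excludes.
\end{itemize}

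The constant $2$ is part of the statement, and the best any such parallelogram can give is only about 5\% below it. So the rounding of $Q$ and the $+1$ cannot be waved away with ``tracking constants should yield $2N^{1/3}$''.

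Your route can be completed. Use Minkowski's linear forms theorem with the real parameter $Q=a/(2N)^{1/3}$.
\begin{itemize}
\item Since $a>2N^{1/3}$, we have $Q>2^{2/3}>1$, so the lattice point found has $u\neq 0$; take $u\ge 1$. Then $v\ge 1$ because $b/a\ge 1$.
\item This gives $|ub-va|\le (2N)^{1/3}$ and $uv\le Q^2b/a+1=(N+D)/(2N)^{2/3}+1$.
\item Hence $|ub-va|+uv\le 3\cdot 2^{-2/3}N^{1/3}+O(1)<2N^{1/3}<D$ for large $N$, with no integer rounding at all.
\end{itemize}

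Alternatively, do what the paper does and apply Minkowski's convex body theorem to all of $K-(a,b)$. This set is a thin lens of area about $\tfrac83 D^{3/2}(ab)^{-1/2}$, which forces $D\le (3/2)^{2/3}N^{1/3}(1+o(1))\approx 1.31N^{1/3}$ for a vertex. That is why the paper can call the constant $2$ generous. Your Dirichlet parallelogram has at most a $1/\sqrt3$ fraction of the lens's area, and that loss is exactly what makes your constant so tight.
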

 {\bf Proof.} This is based on Minkowski's theorem on lattice points in $0$-symmetric convex bodies. Denote by $H_N^*$ the reflection of $H_N$ with respect to the point $(x,y)$. The set $K=H_N \cap H_N^*$ is convex, centrally symmetric with respect to the point $(x,y) \in \ZZ^2$, and contains no lattice point apart from $(x,y)$ because $(x,y)$ is a vertex of $I(H_N)$. By Minkowski's theorem $\Area K \le 4$ and a straightforward (and generous) computation show that $xy \le N+2N^{1/3}$. 

\medskip
According to this lemma, every vertex $(x,y)$ of $I(H_N)$ lies on a hyperbola $xy=n$ with $n\in [N,\ldots,N+2N^{1/3}]$. The number of lattice points on this hyperbola is equal to the number of divisors $d(n)$ of $n$. It is known (see for instance \cite{HarW}) that
\[
\sum_{n\le N}d(n)=N \log N +(2\gamma -1)N +O(N^{1/3}), 
\]
where $\gamma=0.57721..$ is the Euler constant. The better error term $O(N^{0.315})$ was proved by Huxley~\cite{Hux} but we do not need it. Then 
\[
\sum_{N\le n \le N+2N^{1/3}}d(n)=2N^{1/3}\log N+O(N^{1/3}). 
\]

So the set $\{(x,y)\in \RR^2: N\le xy \le N+2N^{1/3}\}$ contains $2N^{1/3}\log N+O(N^{1/3})$ lattice points and indeed $f_0(I(H_N))\ll N^{1/3} \log N.$ \qed

\medskip
Our Theorem~\ref{th:main} claims that a positive proportion of the integer points in the above hyperbolic strip are actually vertices of $I(H_N)$. For technical reasons, we change to a somewhat narrower strip. In this way, we can miss a few vertices, but we can afford to get a lower bound.    
Set $\Delta=\frac12 N^{1/3}$, and write $H^1$ and $H^2$ for the hyperbolas with equations $xy=N$ and $xy=N+\Delta$, respectively, and finally $S$ for the strip between $H^1$ and $H^2$. The above argument shows that  
\begin{equation}\label{eq:delta} 
|S \cap \ZZ^2| = \Delta \log N+O(N^{1/3}).
\end{equation}

\medskip
We mention that the proof of the upper bound in Theorem~\ref{th:main} in \cite{Santos} is different. It starts with the observation that, because of symmetry, it suffices to bound the number of vertices $(x,y)$ with $\sqrt N\le x\le N$. Next, it is checked that if $(x,y)$ with $2^kN^{1/2}\le x\le 2^{k+1}N^{1/2}$ is a vertex of $I(H_n)$, then it is also a vertex of the integer hull of the convex set $H_N\cap \{(u,v)\in \RR^2: 2^kN^{1/2}\le u\le2^{k+1}N^{1/2}, N^{1/2}/2^{k+1} \le v \le 3N^{1/2}/2^{k+1}\}$ where $k=0,1,\ldots,k_0$ and $k_0=\lfloor \frac 12 \log N\rfloor$. The area of this convex set is less than $N$ and according to Andrews' theorem (Theorem~\ref{th:Andr}) its integer convex hull has at most $\ll N^{1/3}$ vertices. There are $O(\log N)$ such convex sets that give the upper bound on $f_0(I(H_N))$.

\bigskip
\section{Auxiliary computations}\label{sec:aux}

Let $(a,b)\in \ZZ^2$ be a primitive vector with $0<a, b$, that is $\gcd(a,b)=1$. The line $L$ with equation $bx+ay=k$ intersects $H^1$ and $H^2$ in points with $x$-component $x_1,x_2$ and $x_1^*,x_2^*$, respectively, see Figure~\ref{fig:tang}, and 
\begin{equation}\label{eq:sectons}
x_1,x_2=\frac{k\pm \sqrt{k^2-4abN}}{2b} \mbox { and }  x_1^*,x_2^*=\frac{k\pm \sqrt{k^2-4ab(N+\Delta)}}{2b}  
\end{equation}
where the signs are chosen so that $x_1<x_2$ and $x_1^* <x_2^*$.

\begin{figure}[h!]
\centering
\includegraphics[scale=0.8]{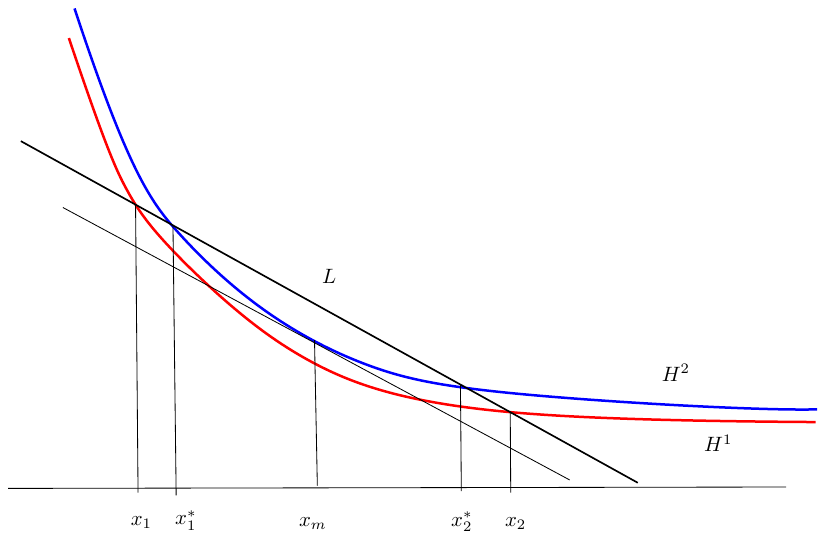}
\caption{ $L$ intersecting $H^1$ and $H^2$.}
\label{fig:tang}
\end{figure}

\medskip
The slope of $L$ is $-m=-b/a$, and $L$ is tangent to $H^2$ iff $k^2=4ab(N+\Delta)$. Then the point of tangency has $x$-component (see Figure~\ref{fig:tang}) 
\begin{equation}\label{eq:tang}
x_m=\frac k{2b}=\sqrt{\frac{N+\Delta}m} \mbox{ so }m=\frac{N+\Delta}{x_m^2}. 
\end{equation}
In this case $L\cap S$ is a segment whose projection to the $x$-axis is the interval $[x_1,x_2]$ with $k^2=4ab(N+\Delta)$ and 
\begin{equation}\label{eq:cor}
x_2-x_1=2 \sqrt{\frac{\Delta}m}=2x_m\sqrt{\frac{\Delta}{N+\Delta}}.
\end{equation}

\medskip
Here $\frac {x_2}{x_1}>1$ is close to one. More precisely
\[
\frac {x_2}{x_1}=1+\frac {x_2-x_1}{x_1}=1+2\frac {x_m}{x_1}\sqrt{\frac{\Delta}{N+\Delta}}<1+2\frac {x_2}{x_1}\sqrt\frac{\Delta}{N}.
\]
This shows that
\begin{equation}\label{eq:ratio}
\frac {x_2}{x_1}<1+2N^{-1/3},
\end{equation}
if $N$ is large enough. Note that this inequality holds for any segment that is entirely in $S$.
\medskip

We will also need a simple fact from multiplicative number theory. Define $F(w)$ as the number of primitive vectors $(a,b)$ with $1\le a, b$ and $ab\le w$. 
\begin{fact}\label{fa:divisors} $$F(w)=\frac 1{\zeta(2)}w\log w +O(w).$$ 
\end{fact}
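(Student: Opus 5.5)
Write $F(w)=\sum_{n\le w} c(n)$, where $c(n)$ is the number of primitive pairs $(a,b)$ with $a,b\ge 1$ and $ab=n$. First I will compute $c(n)$. Such a pair is determined by choosing which of the two factors gets each full prime power $p^{\alpha}\| n$. Hence $c(n)=2^{\omega(n)}$, where $\omega(n)$ is the number of distinct prime divisors of $n$.

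Next I will use the Dirichlet series identity
\[
\sum_{n\ge1}\frac{2^{\omega(n)}}{n^s}=\frac{\zeta(s)^2}{\zeta(2s)}.
\]
At the level of coefficients this gives the convolution
\[
2^{\omega(n)}=\sum_{d^2 e=n}\mu(d)\,d(e).
\]
It can be checked directly on prime powers, since both sides are multiplicative. At $p^\alpha$ the right side is $(\alpha+1)-(\alpha-1)=2$ for $\alpha\ge 2$, and $2$ for $\alpha=1$.

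Summing the convolution over $n\le w$ gives
\[
F(w)=\sum_{d\le\sqrt w}\mu(d)\,D\!\left(\frac{w}{d^2}\right),
\]
where $D(x)=\sum_{e\le x}d(e)=x\log x+(2\gamma-1)x+O(\sqrt x)$ is the divisor summatory function. For the present statement the cruder form $D(x)=x\log x+O(x)$ for $x\ge1$ is enough. Substituting it:
\begin{itemize}
\item The main part is $\sum_{d\le\sqrt w}\mu(d)\frac{w}{d^2}(\log w-2\log d)$.
\item The error is $O\bigl(\sum_d w/d^2\bigr)=O(w)$.
\item In the main part, $w\log w\sum_{d\le\sqrt w}\mu(d)d^{-2}=w\log w\bigl(\zeta(2)^{-1}+O(w^{-1/2})\bigr)$.
\item The remaining term $2w\sum_d \mu(d)\log d/d^2$ is $O(w)$, because $\sum \log d/d^2$ converges.
\end{itemize}
Altogether $F(w)=\zeta(2)^{-1}w\log w+O(w)$.

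The only mildly delicate point is bookkeeping: the tail of $\sum\mu(d)/d^2$ and the $d$ near $\sqrt w$ where $w/d^2$ is of order one. Both are absorbed into $O(w)$ as shown.

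Alternatively, I could count lattice points directly. Write $F(w)=\sum_{a\le w}\#\{b\le w/a:\gcd(a,b)=1\}$ and use $\#\{b\le x:\gcd(a,b)=1\}=\frac{\varphi(a)}{a}x+O(2^{\omega(a)})$. This would need $\sum_{a\le w}\varphi(a)/a^2=\zeta(2)^{-1}\log w+O(1)$ and $\sum_{a\le w}2^{\omega(a)}=O(w\log w)$. The second bound only yields error $O(w\log w)$, so that route would need more care, and I would use the Dirichlet convolution approach above.
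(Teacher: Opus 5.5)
Your proof is correct and follows the same route as the paper: both reduce $F(w)$ to $\sum_{n\le w}2^{\omega(n)}$ by noting that each full prime power of $n$ must go entirely to $a$ or entirely to $b$. The paper then cites the evaluation of this sum as a routine textbook computation. You instead carry it out explicitly via the convolution $2^{\omega(n)}=\sum_{d^2e=n}\mu(d)\,d(e)$ and Dirichlet's estimate $D(x)=x\log x+O(x)$, and all your error terms are correctly absorbed into $O(w)$.
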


\medskip

The {\bf proof} is simple. Let $P(w)$ be the set of all primitive vectors $(a,b)$ with $1\le a,b$ and $ab\le w$. Writing $\omega(n)$ for the number of different prime divisors of $n\in\NN$, one can factor $n$ into $n=ab, 1\leq a, b,\, \gcd(a,b)=1$ in $2^{\omega(n)}$ ways. We have 
\begin{align*}
F(w)&=\sum_{(a,b)\in P(w)}1=\sum_{n\le w}\sum_{(a,b)\in P(w), ab=n}1 \\
&=\sum_{n\le w}2^{\omega(m)} =\frac 1{\zeta(2)}w\log w+O(w).
\end{align*}
The last step is a routine computation; see, for example, Chapter 8.3 in \cite{NivMZ}.
Note that $\zeta(2)=\pi^2/6$.
\qed

\bigskip
\section{Proof of Theorem \ref{th:main}, the lower bound}
 
\medskip
We prepare the ground to show that a positive fraction of the points in $S\cap \ZZ^2$ is a vertex of $I(Q_N)$. We trim $S$ by the condition $x,y \leq N_2$, see (\ref{eq:N_12}), and set $S^*=\{(x,y)\in S: x,y\le N_2\}$, $\,S^1=\{(x,y)\in S:  N_2< y\}$, and $S^2=\{(x,y)\in S: N_2 <x\}$. We can equally well define $S^*$ with the condition $N_1\le x$ rather than $y\le N_2$. The difference is that we cut off the tail on the left hand side with a horizontal or a vertical line across the vertex $(N_1,N_2)$. The difference between the domains is covered by the rectangles $[\frac N{N_2},\frac {N+\Delta}{N_2}]\times [\frac N{N_1},\frac {N+\Delta}{N_1}]$ if $(N_1,N_2)\in S$, or $[\frac N{N_2}, N_1]\times [\frac N{N_1},N_2]$ if $(N_1,N_2)\not\in S$ that do not contain integer points but $(N_1,N_2)$. 
  
\begin{figure}[h!]
\centering
\includegraphics[scale=0.9]{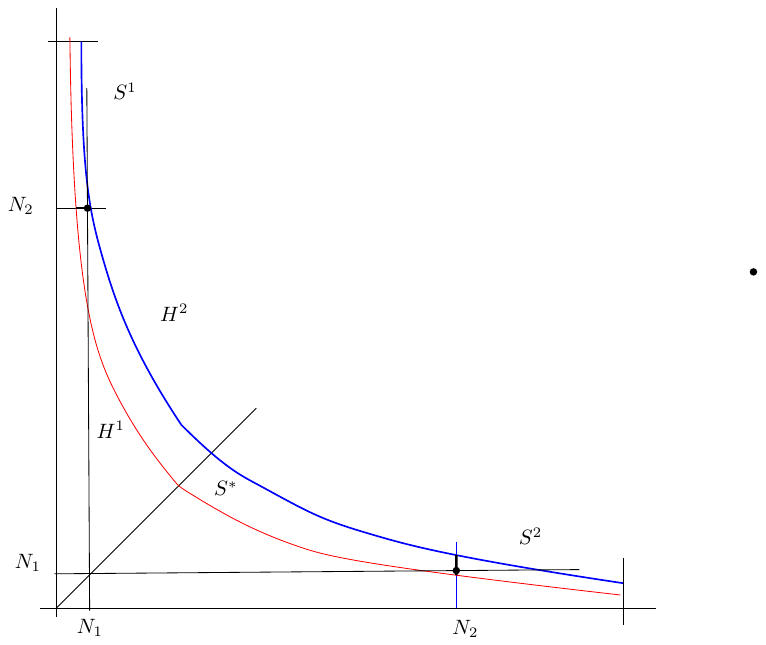}
\caption{$S^*$ and $S^1$.}
\label{fig:H1H2S}
\end{figure}

\medskip
The line $x=k$ intersects $S$ in a segment of length $(N+\Delta)/k-\Delta/k=\Delta/k$ that contains $\lfloor \Delta/k\rfloor$ or $\lfloor \Delta/k\rfloor+1$ lattice points, see Figure~\ref{fig:H1H2S}. Since  
\[
\sum_{k\le N^{1/3}} \frac{\Delta}k=\frac 13 \Delta \log N +O(N^{1/3})
\]
the number of lattice points in $S^1$, and by symmetry in $S^2$, is $\frac 13 \Delta \log N +O(N^{1/3})$. Consequently, 
\[
|S^*\cap \ZZ^2| = \frac 13 \Delta \log N + O(N^{1/3}).
\]

\medskip
We mention in passing that $S^1$ or $S^2$ contains at most $N^{1/3}$ vertices of $I(H_N)$ because there is at most one vertex on a vertical or horizontal line. This implies that the number of vertices of $I(H_N)$ in $S^1$ or $S^2$ {\bf is not} a positive fraction of the lattice points thereby.

\medskip
We let $V$ denote the set of vertices of $I(H_N)$ in $S^*$ and $NV$ the non-vertices, that is, points in $S^*\cap \ZZ^2$ that are not vertices of $I(H_N)$. Note that there are vertices on both lines $x=N_1$ and $x=N_2$, but they can be outside of $S$, because we reduced the width of the hyperbolic strip from $2N^{1/3}$ to $\frac12N^{1/3}$.

\medskip
The projection of $V$ to the $x$ axis splits the interval $[N_1,N_2]$ into $|V|-1$ or $|V|+1$ subintervals, depending on which vertices $(N_1,N_2)$ and $(N_2,N_1)$ are or are not in $V$. In the second case the leftmost and rightmost subintervals are 'one-sided' in the sense that they are limited by the $x$-projection of an element of $V$ only from inside and by $N_1$ or $N_2$ from outside. For any $z\in NV$ we assign a unique $v\in V$ in the following way. We say that $v$ is visible from $z$ (or $z$ is visible from $v$) if the segment connecting them is entirely in $S^*$. Suppose that the $x$ coordinate of $z$ is in the subinterval given by the projections of $v_1$ and $v_2$. One of them must be visible from $z$ because $z$ is not a vertex, and we assign that to $z$. If both of them are visible from $z$, then we assign it to the closest one, if they are equally close, then we assign the one to the right of $z$. We denote this assignment by $z\to v$. Observe that if there are elements of $NV$ with $x$-projection in the leftmost or rightmost one-sided subintervals, then they must be visible from the vertices of $V$ on the inner side, due to convexity. Note also that possibly no $z\in NV$ is assigned to some $v\in V$ at all.
 
\begin{claim}\label{cl:triangl} Assume $z_1,z_2\in NV$, $z_1\to u$ and $z_2\to u$. Then $u,z_1,z_2$ are collinear provided both $z_1$ and $z_2$ are to the left of $u$, or to the right of $u$. In particular, for every $v\in V$ there are at most two halflines starting at $v$ that contain all $z\in NV$ with $z\to v$.
\end{claim}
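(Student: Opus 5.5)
We argue by contradiction. Suppose $z_1,z_2\in NV$, $z_1\to u$, $z_2\to u$, both lie to the left of $u$ (the right case is symmetric), and $u,z_1,z_2$ are not collinear. The plan is to show that the nondegenerate lattice triangle $T=\conv\{u,z_1,z_2\}$ has area $<\tfrac12$, which is impossible: a lattice triangle has area at least $\tfrac12$ by Pick's theorem.

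First we locate the points. By the construction of the assignment, a point $z\in NV$ is assigned to one of the two elements of $V$ whose $x$-projections bound the subinterval containing the $x$-coordinate of $z$. Since $z_1,z_2$ are left of $u$ and both are assigned to $u$, their $x$-coordinates lie in the subinterval $[x(v),x(u)]$, where $v$ is the element of $V$ immediately to the left of $u$; in the one-sided case the left endpoint is $N_1$ instead. Moreover, the segments $[u,z_1]$ and $[u,z_2]$ lie entirely in $S^*$. Since $z_1,z_2,u$ are lattice points in $H_N$, they lie in $I(H_N)$. Because $u$ and $v$ are consecutive vertices and no vertex has its $x$-coordinate strictly between them, all of $T$ lies on the closed side of the edge line through $v,u$ that contains $I(H_N)$. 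We label the points so that $z_2$ lies strictly on the far side, away from the hyperbola, of the line through $u$ and $z_1$.

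Next we show that $T\subset S$, or at least that $T$ lies in a region of $S$ whose area we can control. The lower boundary is easy: $T\subset I(H_N)\subset H_N$, so $T$ lies above $H^1$. The upper boundary is the delicate point, because $\{xy\le N+\Delta\}$ is not convex. Its complement $\{xy\ge N+\Delta\}$ is convex, however, and two sides of $T$ (the sides $[u,z_1]$ and $[u,z_2]$) lie below $H^2$. We then check that a convex region cannot poke into $T$ through the third side $[z_1,z_2]$ without crossing $[u,z_1]$ or $[u,z_2]$. If this fails, we fall back on the simpler region bounded by $H^1$ and the two visible segments, and bound the area of $T$ directly.

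Finally we estimate the area. By (\ref{eq:ratio}), since $[u,z_1]$ and $[u,z_2]$ lie in $S$, the $x$-coordinates of all three vertices of $T$ lie in an interval $[x_0,x_0(1+2N^{-1/3})]$. The part of $S$ over this interval has area $\Delta\log(1+2N^{-1/3})\approx 2\Delta N^{-1/3}=1$. The triangle $T$ is a convex set inscribed in this thin curved strip, so it should use at most about half of it, and we aim for the bound $\Area T<\tfrac12$. This is the main obstacle: the constant is borderline. To make it work we would sharpen the estimate by exploiting the tangency computation (\ref{eq:cor}). A chord of $S$ has horizontal extent at most $2x_m\sqrt{\Delta/(N+\Delta)}$, and a triangle inside the region between a convex arc and a concave arc over that extent has area at most a fixed fraction, strictly less than $1$, of the strip area. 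If needed, the same computation can be redone with the factor $\tfrac12$ already built into $\Delta=\tfrac12N^{1/3}$.

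The contradiction with Pick's theorem shows that $u,z_1,z_2$ are collinear. For the ``in particular'' statement: all $z\to v$ lying to the left of $v$ are on a single halfline starting at $v$, and likewise all those lying to the right of $v$. This gives at most two halflines.
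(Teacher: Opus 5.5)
\textbf{There is a genuine gap.} Your overall strategy is the paper's: assume non-collinearity and show that the lattice triangle $T=\conv\{u,z_1,z_2\}$ has area $<\frac12$. But the decisive estimate is missing, as you yourself flag.

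\emph{The area bound.} Saying that $T$ ``uses about half'' of the piece of $S$ over its $x$-range (area $\approx 2\Delta N^{-1/3}=1$) is a heuristic. Your proposed sharpening is too weak: an inscribed triangle occupying a fixed fraction strictly less than $1$ of that region does not give area $<\frac12$, since you need the fraction to be at most $\frac12$. Nor can you re-choose $\Delta$, which is already fixed at $\frac12N^{1/3}$.

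\emph{The containment $T\subset S$.} This step is unjustified, and in fact it cannot be proved from the hypotheses. A convex set can enter a triangle through one side only, and here this really happens. Take $z_1,z_2$ on $H^2$, close to each other, and $u\in H^1$ to their right, below the tangents to $H^2$ at $z_1$ and at $z_2$. Then $[z_1,u]$ and $[z_2,u]$ lie in $S$. But the chord $[z_1,z_2]$ lies in the convex set $\{xy\ge N+\Delta\}$, i.e.\ above $H^2$. Inside a proof by contradiction you cannot rule such configurations out, and your ``fallback'' region is never specified.

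\textbf{The missing idea.} What justifies the paper's bound $\Area T\le\frac12\cdot\frac{\Delta}{x_1}(x_2-x_1)$ is the exact area formula for a triangle. Let $z_j$ be the leftmost vertex of $T$ and $z_i$ the middle one in $x$-order ($u$ is the rightmost). Then $\Area T=\frac12\,\bigl(x(u)-x(z_j)\bigr)\,\ell$, where $\ell$ is the length of the vertical segment from $z_i$ to the opposite side $[z_j,u]$.
\begin{itemize}
\item Both endpoints of this segment lie in $S$: one is $z_i$, the other is on $[z_j,u]\subset S$.
\item Every vertical section of $S$ is a segment of length $\Delta/x$, so $\ell\le\Delta/x(z_i)\le\Delta/x(z_j)$.
\item Applying (\ref{eq:ratio}) to $[z_j,u]\subset S$ gives $\Area T\le\frac{\Delta}{2}\bigl(\frac{x(u)}{x(z_j)}-1\bigr)<\Delta N^{-1/3}=\frac12$.
\end{itemize}
This gives exactly the factor $\frac12$ you need, with no containment $T\subset S$ required. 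With it in place, the contradiction and your ``in particular'' deduction go through.

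A minor point: consecutive elements of $V$ need not be consecutive vertices of $I(H_N)$, since vertices can lie between $H^2$ and the wider strip of Lemma~\ref{l:narrow}. You do not actually use this, though.
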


\medskip
{\bf Proof.} We assume that $z_1,z_2$ are to the left of $u$ and, further, that the longest side of the triangle $T$ spanned by $u,z_1,z_2$ is the one connecting $u$ and $z_1$. Since $[u,z_1]\subset S$, this edge is the longest if both $z_1$ and $u$ are on the hyperbola $H^1$ and the edge is tangent to $H^2$. So, with $z_1=(x_1,y_1)$ and $u=(x_2,y_2)$ we are in the situation described in Section~\ref{sec:aux}. The projection of the triangle to the $x$-axis is of length at most $x_2-x_1$. 

\medskip
The vertical line containing $(x,0)$ intersects $S$ in a segment whose length is  $(N+\Delta)/x-N/x= \Delta/x \le \Delta/x_1$ when $x \in [x_1,x_2]$. It follows, using inequality (\ref{eq:ratio}) that

\[
\Area T \le\frac 12 \frac {\Delta}{x_1}(x_2-x_1) = \frac {\Delta}2 (\frac {x_2}{x_1} -1) < \Delta N^{-1/3} = \frac12.
\]

The area of a lattice triangle is at least $\frac 12$ proving that $z_1,z_2$ and $u$ are indeed collinear. Note that this last step explains $\frac12$ in the choice of $\Delta$. This argument works on the leftmost subinterval as well, while a very similar argument when $z_1,z_2$ are to the right of $u$ covers the other case and the rightmost subinterval as well. \qed

\medskip
This claim ensures that all points of $NV$ can be covered with at most $2|V|$ line segments, each starting from some $v\in V$, ending in some $z\in NV$, with $[z,v]$ lying in $S^*$. Unfortunately, unlike in a convex lattice chain, these segments are not all in different directions. However, all these segments must have a negative slope because the intersection of $S^*$ with any horizontal or vertical line, and also with any line of non-negative slope, is of length at most $\max\{\Delta/N_1,\Delta N_2/N\}<1$, due to convexity and symmetry.

\medskip
In the next step we fix a primitive vector $(a,b)\in \ZZ^2$ with $0<a,b$, and estimate the number of $z\in NV$ such that the segment $[z,v]$, where $z\to v$,  has direction $(a,-b)$. Then $[z,v]$ is contained in a lattice line having equation $bx+ay=k$ with $k\in \ZZ$. Among these lines the one with 
$k_0=\lfloor \sqrt{4ab(N+\Delta)}\rfloor$ is immediately below the hyperbola $H^2$. On a line $bx+ay=k$ with $k\le k_0$ the non-vertices together with $v$ form a segment. Two such segments are coloured blue in Figure~\ref{fig:over}. The projections of these segments to the $x$ axis are disjoint: if two consecutive ones overlap, for instance if in Figure ~\ref{fig:over} we had $z^*\to v_1$ and the segment $[z^*,v_1]$ were blue, then we should have $z^* \to v_2$ instead of $z^* \to v_1$.

\begin{figure}[h!]
\centering
\includegraphics[scale=0.7]{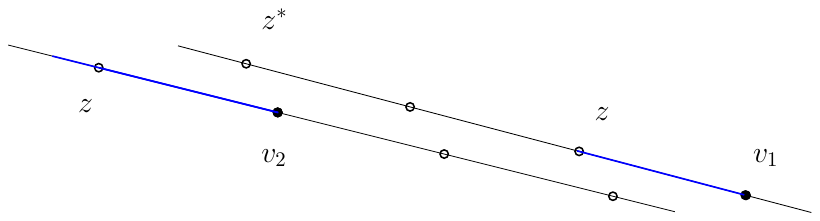}
\caption{$v_1,v_2\in V$ and a few $z \in NV$ with $z\to v_i$.}
\label{fig:over}
\end{figure}

So, according to (\ref{eq:cor}) the total length of the projected segments is at most $x_2-x_1=2\sqrt{\Delta/m}$ where $m$ is given by $-m=-b/a$. So the number, $M_1$, of non-vertices in the given direction below the tangent to $H^2$ satisfies $M_1\le \frac 2a\sqrt{\frac {\Delta}m}= 2 \sqrt{\frac {\Delta}{ab}}$. 

\medskip
The other lattice lines have equation $bx+ay=k_0+k$ with $k=1,2,...$. Their contribution to the non-vertex count is at most (with the notation on Figure ~\ref{fig:tang})
\[
\frac 1a[(x_1^*-x_1)+(x_2-x_2^*)] = \frac {(x_2-x_1)-(x_2^*-x_1^*)}a
\] 
Using equation (\ref{eq:sectons}) gives 
\begin{align*}
(x_2-x_1)-(x_2^*-x_1^*)&=\frac{2\sqrt{k^2-4abN}-2\sqrt{k^2-4ab(N+\Delta)}}{2b}\\
&=\frac{4a\Delta}{{\sqrt{k^2-4ab(N+\Delta)}+\sqrt{k^2-4abN}}}
\end{align*}

Here $k=k_0+h$ with $h=1,2,\ldots,h^*$; we determine $h^*$ later. Summing this for all $h$ gives the upper bound, $M_2$, on the number of non-vertices in the given direction above the tangent line:
\begin{align*} 
M_2& \le \frac{1}{a} \sum_{h=1}^{h^*}\frac{4a\Delta}{\sqrt{(k_0+h)^2-4abN}+\sqrt{(k_0+h)^2-4ab(N+\Delta)}}\\
&= \sum_{h=1}^{h^*}\frac{4\Delta}{\sqrt{k_0^2+2k_0h+h^2-4abN}+\sqrt{k_0^2+2k_0h+h^2-4ab(N+\Delta)}}\\
&\le \sum_{h=1}^{h^*}\frac{4\Delta}{\sqrt{4ab\Delta+2k_0h+h^2}+\sqrt{2k_0h+h^2}}\\
&\le \sum_{h=1}^{h^*}\frac{2\Delta}{\sqrt{4 \sqrt{abN}h}}\le  \sum_{h=1}^{h^*}\frac{\Delta}{\sqrt{h} (abN)^{1/4}}.
\end{align*} 
The sum is taken as long as the last term is at least one. So $\sqrt{h^*}(abN)^{1/4} \le \Delta$ but $\sqrt{h^*+1}(abN)^{1/4} > \Delta$ and so $h^*= \lceil \Delta^2/\sqrt{abN}\rceil.$ Then 
\[
M_2\le \frac {\Delta}{(abN)^{1/4}}\sum_{h=1}^{h^*}h^{-1/2}\le \frac{\Delta^2}{(abN)^{1/2}}\le 2\sqrt{\frac{\Delta}{ab}}.
\]
Here we used the routine estimate $\sum_{h=1}^{h^*}\frac1{\sqrt h}\le 2\sqrt{h^*}$.

\medskip
The number of non-vertices associated with the fixed primitive vector $(a,b)$ is then at most $M_1+M_2\le 4\sqrt{\Delta/ab}$. Suppose $(a_i,b_i)\in \ZZ^2$ for $i=1,\ldots,R$ is the set of primitive vectors (with distinct directions) that take part in the relations $z\to v$ with $z \in NV$ and $v \in V$. Then $R\le 2|V|$ as one vertex is used with at most two vectors $(a_i,b_i).$ At the same time 
\begin{equation}\label{eq:3root}
|NV| \le \sum_{i=1}^R 4\sqrt{\frac{\Delta}{a_ib_i}}=4\sqrt{\Delta}\sum_{i=1}^R \frac1{\sqrt{a_ib_i}}.
\end{equation} 
The last sum is the largest when we choose $R$ different primitive vectors with $a_ib_i$ as small as possible, so with $a_ib_i\le w$ for the least possible $w\in \NN$. From {\bf Fact ~\ref{fa:divisors}} this $w$ satisfies $F(w-1)<R\le F(w)$ and then
\[
R=\frac1{\zeta(2)}w\log w+O(w). 
\]
This $w$ satisfies 
\[ w\ll \frac R{\log R}\ll \frac {|V|}{\log |V|}. \]

\medskip
Finally, by partial summation starting from {\bf Fact ~\ref{fa:divisors}} we get the following.
\begin{align*}
\sum_{i=1}^R \frac1{\sqrt{a_ib_i}}&\le \sum_{(a,b)\in P(w)}\frac1{\sqrt{ab}} = \sum_{n\le w}\frac{2^{\omega(n)}}{\sqrt{n}} = \frac{F(w)}{\sqrt{w}}-\int_1^w F(t)\,dt^{-1/2} = \\& =
\frac{F(w)}{\sqrt{w}}+\frac12\int_1^w F(t)t^{-3/2}\,dt =
\frac2{\zeta(2)}\sqrt{w}\log w + O(\sqrt{w}) \ll\\ &\ll \sqrt{|V| \log |V|}.
\end{align*}
Going back to (\ref{eq:3root}) we see that 
\[
|NV|\ll 4\sqrt{\Delta} \sqrt{|V| \log |V|} \ll  \sqrt{N^{1/3}|V| \log N}.
\]
Finally
\[
N^{1/3}\log N \ll |V|+|NV| \ll |V|+\sqrt{N^{1/3}|V| \log N}
\]
implying that $|V| \gg N^{1/3}\log N$. Thus $f_0(I(H_N))\ge |V|\gg N^{1/3}\log N$.\qed

\bigskip
\section{Preparations for the proof of Theorem~\ref{th:area}}\label{sec:prepa}

\medskip
We begin by setting up the necessary parameters and notation. With the function $y=\frac Nx$ we have 
\[
y'=-\frac N{x^2} \mbox{ and } y''=\frac{2N}{x^3}.
\] 
So the radius of curvature $r=r_x$ at the point $(x,y) \in H^1$ is given by
\[
r_x=\frac{(1+y'^2)^{3/2}}{y''}=\frac{(x^4+N^2)^{3/2}}{2Nx^3}.
\] 
For simpler notation we will write $f(N) \asymp g(N)$ for $f(N) \ll g(N) \ll f(N)$. Because of symmetry, we are going to work only when $x\ge \sqrt N$ so $x^4 \ge N^2$ and $x^4 \le x^4+N^2 \le 2x^4$. Then we have 
\begin{equation}\label{eq:kappa}
\frac {x^3}{2N}\le  r=r_x \le  \frac {\sqrt 2 x^3}{N} \mbox{ or with the new notation } r_x \asymp \frac{x^3}N.
\end{equation}

\medskip
Assume that the primitive vector $p=(a,-b) \in \ZZ^2$ with $0<b\le a$ is the direction of an edge $E_p$ of $I(H_N)$, and let $C^p$ be the corresponding cap cut from $H_N$ by line $L_p$ of $E_p$. The equation of $L_p$ is of the form $bx+ay=k$ for some integer $k$. Let $(x,y)$ be the common point of $H^1$ and the tangent line to $H^1$ parallel with $L_p$. Its equation is $bx+ay=\sqrt{4abN}$ as the computations around equations (\ref{eq:sectons}) and (\ref{eq:tang}) show.

The slope of $L_p$ is given by $y'=\frac{-b}a=-\frac N{x^2}$ and $x=x_p=\sqrt {\frac{aN}b}$ so $x$ is uniquely determined by $p$ and vice versa. 

\medskip
Our main target is to show that $\Area \left(\bigcup C^p \right) \ll N^{1/3}\log N$ where the union is taken for all inner normals $p$ to the edges of $I(Q_N)$. 
This implies $A_N\ll  N^{1/3}\log N$.

\begin{figure}[h!]
\centering
\includegraphics[scale=0.8]{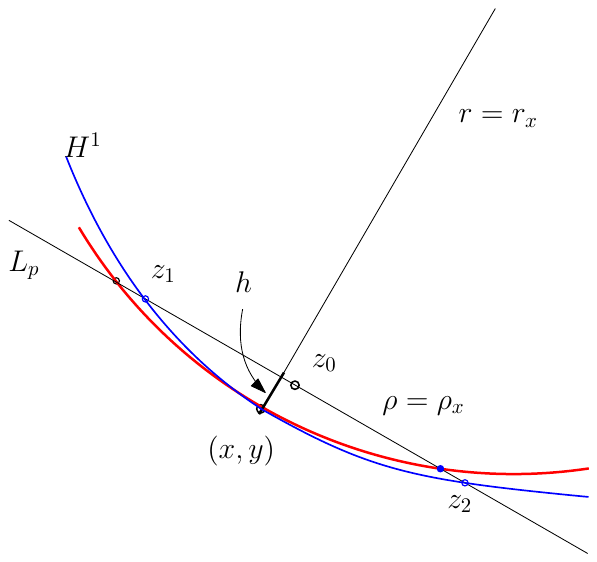}
\caption{$H^1$ and the osculating circle.}
\label{fig:kappa}
\end{figure}
We use the notation of Figure~\ref{fig:kappa} where $H^1$ is coloured blue and the osculating circle red. The height (or width in orthogonal  direction of $p$) of $C^p$ is $h=h_x$. The equation of $L_p$ is $bx+ay=\sqrt{4abN}+h|p|$, where $|p|$ is the Euclidean length of the vector $p$. (We should have written here $\ldots+h|p^{\perp}|$ because $p^{\perp}=(b,a)$ is the inner normal to $I(H_N)$ on $E_p$, but it matters not.) $L_p$ intersects $H_N$ in the segment $[z_1,z_2]$ and the osculating circle in a segment whose length is $2\rho=2\rho_x$. It follows that $\rho^2=h(2r-h)$, or with the $\asymp$ notation $\rho^2\asymp hr$. Let $z_0$ denote the midpoint of the segment $[z_1,z_2]$. The osculating circle approximates $H_N$  in the vicinity of the point $(x,y)\in H^1$.

\begin{claim}\label{cl:rho}When $N$ is large $|z_2-z_0|\asymp \rho$ and $|z_1-z_0|\asymp \rho$.
\end{claim}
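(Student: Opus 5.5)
The plan is to compute both half-chords exactly from the quadratic (\ref{eq:sectons}) and compare them with $\rho^2\asymp hr$ from (\ref{eq:kappa}). As a first step I note that $z_0$ is the exact midpoint of $[z_1,z_2]$, so $|z_2-z_0|=|z_1-z_0|=\frac12|z_2-z_1|$, and it is enough to show $|z_2-z_1|^2\asymp hr$. The line $L_p$ has equation $bx+ay=k$ with $k=k^*+h|p|$ and $k^*=\sqrt{4abN}$. By (\ref{eq:sectons}) the $x$-projection of $[z_1,z_2]$ has length $\sqrt{k^2-4abN}/b$. Since the direction of the segment is $(a,-b)$, its Euclidean length is this quantity multiplied by $|p|/a$. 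Hence
\[
|z_2-z_1|^2=\frac{|p|^2\,(k^2-4abN)}{a^2b^2}=\frac{|p|^2\,(2k^*h|p|+h^2|p|^2)}{a^2b^2}.
\]

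The second step compares the main term with $hr$. Because $0<b\le a$, we have $a\le|p|\le\sqrt2\,a$, so the main term is
\[
\frac{2k^*h|p|^3}{a^2b^2}\asymp \frac{h\,a^3\sqrt{abN}}{a^2b^2}=h\Big(\frac ab\Big)^{3/2}N^{1/2}.
\]
On the other side, $x=x_p=\sqrt{aN/b}\ge\sqrt N$, and (\ref{eq:kappa}) gives
\[
r\asymp \frac{x^3}{N}=\Big(\frac ab\Big)^{3/2}N^{1/2}.
\]
So the main term is $\asymp hr$. The quadratic term is $\asymp h^2a^2/b^2$. Its ratio to the main term is $\asymp h\sqrt{b/(aN)}=h/y$, where $y=N/x$ is the second coordinate of the tangency point.

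The third step, which I expect to be the only real obstacle, is an a priori bound showing that $h$ is much smaller than both $y$ and $r$. Once that is known, the quadratic term is negligible and $\rho^2=h(2r-h)\asymp hr$, which finishes the proof. To get the bound, I will use that the endpoints of the edge $E_p$ are vertices of $I(H_N)$, so by Lemma~\ref{l:narrow} they lie in $\{N\le xy\le N+2N^{1/3}\}$. By convexity the whole segment $E_p$ lies between $H^1$ and the hyperbola $xy=N+2N^{1/3}$, and hence so does the point of $L_p$ closest to the tangency point on $H^1$.

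It follows that $h$ is at most the width of this strip measured normal to $H^1$, which is at most its vertical width $2N^{1/3}/x\le 2N^{-1/6}$. This uses one caveat: the normal at the tangency point meets $L_p$ within the region relevant to the edge. If that is awkward, I will bound $h$ by the vertical gap at the nearer endpoint of $E_p$ instead. For edges of $I(Q_N)$ we have $x\le N_2$, so $y\ge N_1\asymp N^{1/3}$ and $r\gg N^{1/2}$. Therefore $h/y\ll N^{-1/2}$ and $h/r\to0$, so for $N$ large both approximations hold with absolute constants, as claimed.
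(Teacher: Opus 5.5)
Your Steps 1 and 2 are correct and are essentially the paper's computation. You get the chord length from (\ref{eq:sectons}) and the relation $r\asymp x^3/N=(a/b)^{3/2}N^{1/2}$. The quadratic-to-main ratio is $h|p|/(4\sqrt{abN})\asymp h/y$; your intermediate $h\sqrt{b/(aN)}$ has $a,b$ swapped, but $h/y$ is right. The genuine gap is in Step 3: the bound $h\le 2N^{-1/6}$ is both wrongly derived and false.

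Convexity only gives $E_p\subset H_N$. The region $\{N\le xy\le N+2N^{1/3}\}$ is not convex, and edges of $I(H_N)$ do leave it, even though their endpoints stay inside. Concretely, take $N=m^2+1$ and $p=(1,-1)$, so $x_p=\sqrt N\in[N_1,N_2]$.
\begin{itemize}
\item Every lattice point with $x+y\le 2m$ has $xy\le m^2<N$, so the line $x+y=2m+1$ supports $I(H_N)$.
\item Its edge contains about $2N^{1/4}$ lattice points, among them the non-vertex $(m,m+1)$, where $xy=N+m-1\approx N+\sqrt N$.
\item Here $h=(2m+1-2\sqrt{m^2+1})/\sqrt2\to 1/\sqrt2$, contradicting $h\le 2N^{-1/6}$.
\end{itemize}
Your fallback fails too. $h$ is the \emph{largest} distance between $L_p$ and $H^1$ along the chord, attained opposite the tangency point. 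The endpoints of $E_p$ lie near $z_1,z_2$, where that gap is small (it vanishes at $z_1,z_2$). So the gap at an endpoint cannot bound $h$ from above.

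The repair is easy, since you only need $h\ll\min(y,r)$, and any bound $h=O(1)$ suffices. The paper uses $h<2$: the interior of $C^p$ is lattice point free, so $C^p$ cannot contain a unit disk.

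An elementary alternative goes as follows. If $(x,y)$ is the tangency point, then $(\lceil x\rceil,\lceil y\rceil)\in H_N\cap\ZZ^2$ lies on the $I(H_N)$ side of $L_p$. Hence $h|p|\le b(\lceil x\rceil-x)+a(\lceil y\rceil-y)<a+b\le\sqrt2\,|p|$, so $h<\sqrt2$.

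With either bound, $h/y\ll N_2/N\ll N^{-1/3}$ and $h/r\ll N^{-1/2}$ (using $x\ge\sqrt N$). Then $\rho^2=h(2r-h)\asymp hr$, and your first two steps finish the proof.
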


\medskip
{\bf Proof.} Set $z_i=(x_i,y_i)$ for $i=0,1,2$. We are going to show $x_2-x_0\asymp \rho.$ This implies that $|z_2-z_0|\asymp \rho$. The proof of $|z_1-z_0|\asymp \rho$ is analogous and is omitted. 
As the interior of $C^p$ is lattice point free, $C^p$ cannot contain a unit disk. Consequently $h<2$. Using formula (\ref{eq:sectons}) we see that
\[
x_2-x_0=\frac1{2b}\sqrt{2h|p|\sqrt{4abN}+h^2|p|^2}, 
\]
and we want to show that this is $\asymp \rho\asymp \sqrt {hr}$, or, after simplifying by $h$ and taking squares, that $|p|(4\sqrt{abN}+h|p|)\asymp b^2r$. 

\medskip
We observe that $|p|\le \sqrt {abN}$. Indeed, taking squares this is the same as $a^2+b^2<abN$ and dividing by $a^2$, this is the same as $1+(b/a)^2<(b/a)N$. Here $b/a\le 1$ and $N/a>2$ obviously. Then using $h\le 2$ we have $4\sqrt{abN}+h|p| \asymp 4\sqrt{abN}$ and the target is to see that 
\[
|p| \sqrt{abN}\asymp rb^2 \mbox{ or by taking squares }(a^2+b^2)abN \asymp r^2b^4.
\]
Here $a^2+b^2\asymp a^2$ (because $0<b\le a$). Dividing by $a^3$ leads to 
\[
N\asymp \left(\frac ba\right)^3r^2\asymp \left(\frac N{x^2}\right)^3\left(\frac {x^3}{N}\right)^2=N.
\]\qed

\medskip
It is easy to determine the area of $C^p$ with the necessary precision: $\Area C^p \asymp h|z_2-z_1|\asymp h\rho$ and since $hr\asymp \rho^2$ we see
\begin{equation}\label{eq:sim}
\Area C^p \asymp h\rho \asymp \frac {\rho^3}r \asymp \sqrt{h^3 r}.
\end{equation}

\medskip
We {\bf remark} here that $\Area \left(\bigcup C^p \right)$ is of order $N$ when the union is taken over all inner normals to $I(H_N)$. To see this, it suffices to consider the caps $C^p$ with $x=x_p \ge N^{2/3}$ in which case $C^p$ lies between the horizontal lines $y=n$ and $y=n+1$ with $n=1,\ldots,N^{1/3}$ so $x\asymp \frac Nn$. The corresponding egde $E_p$ connects the points $(\lceil \frac Nn \rceil,n)$ and $(\lceil \frac N{n+1} \rceil,n+1)$. So $\rho \asymp \frac N{n^2}$ and
\[ 
\Area C^p \asymp \frac {\rho^3}r \asymp \frac N{n^3}.
\]
The area of $\bigcup_{x_p\ge N^{2/3}} C^p$ is of order $\sum_1^{N^{1/3}}\frac N{n^3} \asymp N$.

\bigskip
\section{Proof of Theorem~\ref{th:area}, the upper bound} 

\medskip
The target is to show that $\Area \left(\bigcup C^p \right) \ll N^{1/3}\log N$ where the union is taken over all $p$ with $N^{1/2}\le x_p\le N_2 \asymp N^{2/3}$. For $j=1,2,\ldots, \lceil \frac {\log N}6 \rceil$ we define the interval
\[
I_j=[2^{-j}N_2,2^{-j+1}N_2]. 
\]

Let $G_j$ be the set of inner normals $p$ with $x_p \in I_j$. We are going to show that $ \Area \left(\bigcup_{p\in G_j}\right) C^p \ll N^{1/3}$ for every $j$. This will complete the proof as there are $O(\log N)$ such intervals. For a fixed $p \in G_j$, the cap $C^p$ has parameters $x_p,h_x,r_x$ and $\rho_x$. The advantage of the intervals $I_j$ is that the radius of curvature $r_x\asymp \frac {x^3}N$ on $I_j$ is almost constant, and so is the slope $-\frac N{x^2}$ of the tangent to $H^1$ at $x$:
\[
2^{-3j}N \ll r_x \ll 2^{-3j+3}N \mbox{ and }  -2^{2j}N^{-1/3} \le -\frac N{x^2} \le -2^{2j-2}N^{-1/3} 
\]
We set $R_j:= 2^{-3j}N$ and define $\Phi_j\in (0,\pi/4]$ via $\tan \Phi_j:= 2^{2j}N^{-1/3}$. So for all $x\in I_j$
\begin{equation}\label{eq:bounded}
x\asymp 2^{-j}N^{2/3}\mbox{ and }  r_x\asymp R_j \mbox{ and }  \frac N{x^2}\asymp \tan \Phi_j \mbox{ and }|I_j|=2^{-j}N^{2/3}.
\end{equation}
We distinguish three cases.

\medskip
{\bf Case 1} when $h_x \le r_x^{-1/3}$. Then $h_x\ll \left(\frac{x^3}N\right)^{-1/3}=\frac {N^{1/3}}x$. So in this case every cap has height $\ll \frac {N^{1/3}}x$ and the projection to the $x$ axis of the union of these caps has length $\ll |I_j|$, because $|z_2-z_1|\asymp \rho\asymp\sqrt{hr}\ll\sqrt r\ll |I_j|N^{1/6}x^{-1/2}$. So the area of the union in Case 1 is $\ll \frac {N^{1/3}}x |I_j| \ll N^{1/3}$.

\medskip
In the remaining cases we will use the flatness theorem. In the 2-dimensional case it says that if a convex set $K \subset \RR^2$ (with nonempty interior) is lattice point free, then its lattice width is at most $1+2\sqrt 3$. This is a result of Hurkens~\cite{Hur}. The original flatness theorem in all dimensions is due to Khintchine~\cite{Khin}, for a more recent version and with better constants, see \cite{KannL}. In our case each $C^p$ (or rather the interior of $C^p$) is lattice point free, so there is a primitive vector $q\in \ZZ^2$ such that 

\[
\max \{q(z_1-z_2): z_1,z_2 \in C^p\} \le 1+2\sqrt 3. 
\]
That is, the flatness of $C^p$ is at most $1+2\sqrt 3$ and its flatness direction is $q$.

\medskip
{\bf Case 2} when $p^{\perp}$ is the flatness direction of $C^p$. This implies $h_x\le \frac{1+2\sqrt 3}{|p|}$. 
Using (\ref{eq:sim}) gives $\rho^2\asymp hr \ll \frac r{|p|}$. The edge of $C^p$ has length at least $|p|$ (right?) and so $|p|\le 2\rho \ll \sqrt{\frac r{|p|}}$. 
Implying  $|p| \ll r^{1/3}$ that will be needed soon. Thus, using $r\asymp R_j$ from (\ref{eq:bounded})
\begin{align}\label{eq:p3/2}
 &\Area C^p\asymp h\rho \ll \frac{1+2\sqrt 3}{|p|}\sqrt{\frac{r}{|p|}}\ll \sqrt{\frac r{|p|^3}} \mbox{ and }\\ \nonumber
 &S_j:=\sum \Area C^p \ll \sqrt R_j \sum \frac 1{|p|^{3/2}}
\end{align}
where the last two sums are taken over all $p \in G_j$ for which $C^p$ is in Case 2. 

\medskip
The last sum in (\ref{eq:p3/2}) does not change if we replace $p=(a,-b)$ by $(a,b)$ so we can take it when $p$ runs over all primitive vectors $p=(a,b)$ with $|p|\ll R_j^{1/3}$ and the slope of $p$ is $\ll \tan \Phi_j$. 

\medskip
The last sum in (\ref{eq:p3/2}) can be estimated by (we drop the condition of $p$ being primitive)
\[
\ll \sum_{a\ll R_j^{1/3}} \frac1{a^{3/2}}\sum_{b\ll a\tan \Phi_j}1 \ll \tan\Phi_j\sum_{a\ll R_j^{1/3}}\frac1{a^{1/2}} \ll
\tan\Phi_j R_j^{1/6}.
\]

Using the values of $R_j$ and $\Phi_j\asymp \tan \Phi_j$ we have 
\[
S_j \ll R_j^{2/3}\frac {2^{2j}}{N^{1/3}} \ll N^{1/3}.
\]

\smallskip
{\bf Case 3} when $h_x>\frac{1+2\sqrt 3}{|p|}$ and, of course, $h_x>r_x^{-1/3}$. The flatness direction of $C^p$ is a primitive vector $q=(u,v) \in \ZZ^2$ different from $p^{\perp}$ and we associate $q$ with $p$ or $C^p$. Then 
\[
h_x \le \frac{1+2\sqrt 3}{|q|} \mbox{ implying }\, |q|\ll r_x^{1/3}.
\]
The line of $E_p$ intersects $H^1$ at points $z_1$ and $z_2$, see Figure~\ref{fig:pandq} where the hyperbola $H_N$ is not shown as it is very close to the osculating circle (which is coloured red). Claim~\ref{cl:rho} shows that $|z_2-z_1|\asymp 2\rho$ (although Figure~\ref{fig:pandq} apparently shows that $|z_2-z_1|=2\rho$).

\begin{figure}[h!]
\centering
\includegraphics[scale=0.8]{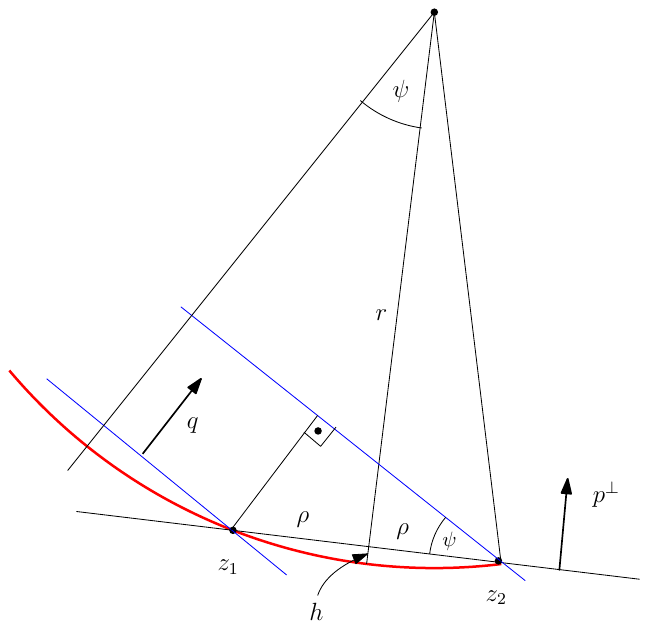}
\caption{The cap $C^p$ and the osculating circle.} 
\label{fig:pandq}
\end{figure}

\medskip
We determine the angle $\psi$ of $p^{\perp}$ and $q$. In the right angled triangle with vertices $z_1,z_2$ (see Figure~\ref{fig:pandq})
\[
|\sin \psi|= \frac{|(z_2-z_1)q|}{|z_2-z_1||q|}\ll \frac {1+2\sqrt 3}{2\rho |q|}\ll \frac 1{\sqrt{rh}|q|}\ll \frac 1{r^{1/3}|q|}\le r^{-1/3},
\]
because $\|q\|\geq1$. Thus, $\psi$ is small and $\tan(\Phi_j+\psi)\asymp \tan\Phi_j$. 

\medskip
Let $qx=k_0$ be the equation of the line $L_0$ tangent to $H^1$ (with normal $q$). It follows from equation (\ref{eq:sectons}) that $k_0=\sqrt{4uvN}$. The $x$-component of the point of $L_0\cap H^1$ is $x(q)=\frac {k_0}{2u}$ and $x(q)$ may not be in $I_j$ but $x(q)\in I_{j-1}\cup I_j\cup I_{j+1}$ because $\psi$ is very small compared to $\Phi_j$. Then 
\begin{equation}\label{eq:Ij}
|x(q)-x|\le 2|I_j|=2^{-j+1}N^{2/3}.
\end{equation}

\medskip
Let $qx=k_0+k_i$ be the equation of the line $L_i$ containing $z_i$, $i=1,2$. We record the inequality
\begin{equation}\label{eq:k1k2}
1\leq |(z_2-z_1)q|=k_2-k_1 \le 1+2\sqrt3, 
\end{equation}
where the upper bound follows from the fact that $q$ is the flatness direction of $C^p$ and the lower bound holds because there are at least two lattice points on the segment $[z_1,z_2]$, namely the endpoints of $E_p$. Let $z$ be the endpoint of $E_p$, the one closer to $z_2$, and let $qx=k_0+k$ be the equation of the line containing $z$. Then $k_0+k$ is a positive integer and $k_1,k,k_2$ are very close to each other since $k_1+1\le k\le k_2\le k_1+1+2\sqrt 3$. This time we associate the pair $(q,k_0+k)$ with the cap $C^p$. 
Note that $k>0$ and $k_0+k$ is a positive integer. In this way $(q,k_0+k)$ is associated with at most two caps because the line $qx=k_0+k$ contains at most two points from the boundary of $I(Q_N)$; when there are two such caps, one of them is to the left of $x(q)$, the other to the right.

\medskip
 $L_i$ intersects $H_1$ in two points. The difference between their $x$ components can be computed using (\ref{eq:sectons}). For $L_i$ this difference is 
\[
d_i:= \frac {\sqrt{(k_i+k_0)^2-k_0^2}}{u}=\frac {\sqrt{k_i^2+2k_ik_0}}{u}. 
\]
Observe that $2\rho  \asymp |z_2-z_1|\asymp d_2-d_1$. We estimate $d_2-d_1$:
\begin{align}\label{eq:2rho}
2\rho &\asymp\frac 1u \left({\sqrt{k_2^2+2k_2k_0}}-{\sqrt{k_1^2+2k_1k_0}}\right)=\frac1u \frac {(k_2^2+2k_2k_0)-(k_1^2+2k_1k_0)} {\sqrt{k_2^2+2k_2k_0}+\sqrt{k_1^2+2k_1k_0}}\\\nonumber 
   &\asymp\frac{k_2-k_1}u \frac{k_2+k_1+2k_0}{\sqrt{k_2^2+2k_2k_0}+\sqrt{k_1^2+2k_1k_0}}\\\nonumber
   &\asymp\frac1u \frac{k+k_0}{\sqrt{k^2+kk_0}}\asymp \frac {\sqrt{k+k_0}}{u\sqrt k},
\end{align}
the last line is justified by (\ref{eq:k1k2}) and by the fact that $k_1,k,k_2$ are close to each other. 

\begin{figure}[h!]
\centering
\includegraphics[scale=0.7]{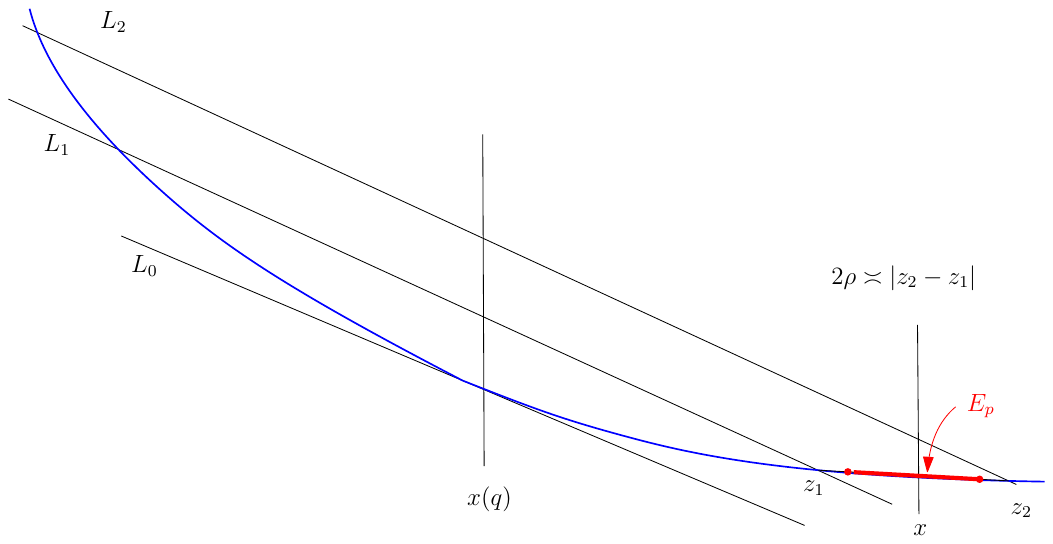}
\caption{The lines $L_i$ and $E_p$.} 
\label{fig:k1k2}
\end{figure}

\medskip
We {\bf claim} $k\asymp k_2 \ll k_0$. Observe that $d_2=\frac{\sqrt{k_2^2+2k_2k_0}}u\ge \frac {k_2}{u}$. Figure~\ref{fig:k1k2} shows that $d_2/2>2\rho$ and $|x-x(q)|\ge d_2/2-\rho\ge d_2/4$. Thus $|x-x(q)|\ge \frac{k_2}{4u}$. On the other hand $\frac{k_0}{4u}=\sqrt{\frac v{2u}N}\asymp 2^{-j}N^{2/3}$ because $\frac vu$ is the slope of $q$ which $\asymp 2^{-2j}N^{1/3}$. (\ref{eq:Ij}) shows that $|x-x(q)|\le 2|I_j|=2^{-j+1}N^{2/3}\asymp \frac{k_0}{4u}$. Comparing the lower and upper bounds on $|x-x(q)|$ yields $k_2 \ll k_0$ indeed.

\medskip
We continue equation (\ref{eq:2rho}) using $k\ll k_0$
\[
2\rho \asymp \frac {\sqrt{k_0}}{u\sqrt {k}}=\frac {\sqrt[4]{4uvN}}{u\sqrt k}\asymp \frac {\sqrt[4]{\left({\frac vu}\right)^3N}}{\sqrt{vk}}\asymp \frac{\sqrt {r}}{\sqrt{|q|k}},
\]
where it is easy to check that $r=r_x \asymp \sqrt{\left({\frac vu}\right)^3N} $

\medskip
From this we see that $h \ll \frac 1{k|q|}$ because $hr\asymp\rho^2$. Further $k|q|\ll \frac 1h \ll r^{1/3}\asymp R_j^{1/3}$. Then 
\[
\Area C^p \ll h\rho \ll \frac 1{k|q|} \sqrt{\frac r{k|q|}}\ll \frac {\sqrt R_j}{(k|q|)^{3/2}}.
\]
We note here that, for a fixed $q$, $k_0+k$ are different positive integers for all caps $C^p$ in Case 3 that are to the left of $x(q)$. Since and $k_0\le k_1$ and $k_1+1\le k$, we have
\[
\Area C^p \ll \frac {\sqrt R_j}{(\lfloor k \rfloor|q|)^{3/2}}
\]
and the positive integers $k^*:=\lfloor k \rfloor$ are all distinct for the caps to the left of $x(q)$. The same applies to the caps $C^p$ in Case 3 that are to the right of $x(q)$. So it suffices to consider the caps to the left of $x(q)$.

\medskip
The sum in the last formula corresponds to (\ref{eq:p3/2}) with $k^*|q|=|k^*q|$ instead of $|p|$ but with essentially the same conditions. The angle of $q=(u,v)$ differs from that of $p^{\perp}=(b,a)$ by at most $\arcsin \psi \le r^{-1/3}$. In the same way as in Case 2 we consider $\sum (k^*|q|)^{-3/2}$ for all primitive $q$ and integer $k^*$ with $k^*|q| \ll R_j^{1/3}$ but instead of $q=(u,v)$ we sum for $q'=(v,u)$ that spans a positive angle with the $x$-axis, as small as $\ll \arcsin \Phi_j+\arcsin \psi\ll \arcsin \Phi_j \ll \frac N{x^2}$. This means that the slope of $q'$ is $\ll \tan\Phi_j$ again, and so the contribution of all $k^*q'$ is already contained in the computation in Case 2. For $k^*=1$ they are the primitive $p$s and for $k^*>1$ they are the non-primitive ones there.

\medskip
In every case the area of the union of the caps is $\ll N^{1/3}$, so indeed $S_j \ll N^{1/3}$\qed
\bigskip
\section{Proof of Theorem~\ref{th:area}, the lower bound} 

\medskip
The lower bound for $A_N$ is simple if we use Theorem~\ref{th:main2}. As before, the primitive vector $p=(a,-b) \in \ZZ^2$ with $0<b\le a$ is the direction of an edge $E_p$ of $I(H_N)$, and $L_p$, the line of $E_p$, has the equation $bx+ay=k$ for some integer $k$. The equation of the tangent line to $H^1$ parallel to $L_p$ is $bx+ay=\sqrt{4abN}=\kappa$. The cap $T^p$ is defined by translating the tangent line up to its intersection with $H_N$ to have length $|p|$. The equation of the translated line is $bx+ay=\lambda$ for some real number $\lambda$. As there are no lattice points, not alone a unit square, in the open cap cut from $H_N$ by $L^p$, we easily have $\kappa<\lambda < 2\kappa$ and actually $\kappa=\lceil\lambda\rceil$.

The area between $E_p$ and $H^1$ (cut by vertical lines) is $\ge \Area T^p$, hence
\begin{equation}\label{eq:AN}
A_N\ge2\sum_{a,b}\Area T^p,
\end{equation}
where the sum is taken over the slopes of the edges $E_p$ with horizontal projection in $[N^{1/2},N^{2/3}]$. The computations around equations (\ref{eq:sectons}) and (\ref{eq:tang}) show that the horizontal projection of $T^p$ is $\sqrt{\lambda^2-4abN}/b$ and this should be $a$ by definition, and so 
\[
\lambda^2-\kappa^2=(ab)^2, \mbox{ or }\, \lambda - \kappa =\frac{(ab)^2}{\lambda+\kappa} \asymp \frac{(ab)^2}{\kappa} = \frac{(ab)^{3/2}}{\sqrt {4N}}.
\]
The area of $T^p$ can be trivially estimated from below by the area of the triangle with vertical height $(\lambda-\kappa)/a$ and horizontal projection $a$, which means in (\ref{eq:AN})
\[
A_N\ge2\sum_{a,b}\Area T^p\ge\sum_{a,b} (\lambda-\kappa)\gg\sum_{a,b}\frac{(ab)^{3/2}}{N^{1/2}}.
\]
This sum has $R\asymp N^{1/3}\log N$ terms (see Theorem~\ref{th:main2}) and is smallest when $(a,b),\,1\le b\le a$ runs over different primitive vectors with smallest possible $ab$, say $ab\le w$. The analysis is exactly the same as in the calculation of (\ref{eq:3root}) with $w\asymp R/\log R\asymp N^{1/3}$, $w\log w\asymp R\asymp N^{1/3}\log N$. \qed

\vskip1cm
{\bf Acknowledgements.} The second author (IB) was partially supported by NKFIH grant No. 133819 and also by the HUN-REN Research Network.

\end{document}